\documentclass[hidelinks,onefignum,onetabnum]{siamart220329}

\usepackage{lipsum}
\usepackage{amsfonts}
\usepackage{graphicx}
\usepackage{epstopdf}
\usepackage{algorithmic}
\ifpdf
  \DeclareGraphicsExtensions{.eps,.pdf,.png,.jpg}
\else
  \DeclareGraphicsExtensions{.eps}
\fi

\newsiamremark{remark}{Remark}
\newsiamremark{Ex}{Example}
\newsiamremark{hypothesis}{Hypothesis}
\crefname{hypothesis}{Hypothesis}{Hypotheses}
\newsiamthm{claim}{Claim}
\newsiamthm{conj}{Conjecture}

\author{Yuchang Liu\thanks{School of Mathematical Sciences,
         University of Science and Technology of China,
         Hefei, Anhui 230026, P.R. China.
  (\email{lissandra@mail.ustc.edu.cn}).}
\and Yan Jiang\thanks{ School of Mathematical Sciences,
         University of Science and Technology of China, Hefei, 
         Anhui 230026, P.R. China.  
         (\email{jiangy@ustc.edu.cn}). } 
\and Mengping Zhang\thanks{ School of Mathematical Sciences, University of Science and Technology of China, Hefei,
         Anhui 230026, P.R. China.  
         (\email{mpzhang@ustc.edu.cn}).} 
}

\usepackage{amsopn}

\makeatletter
\@ifundefined{Ex}{%
  \usepackage{amsfonts,amssymb,graphicx,epstopdf}%
  \@ifundefined{remark}{\newsiamremark{remark}{Remark}}{}%
  \newsiamremark{Ex}{Example}%
  \crefname{Ex}{Example}{Examples}%
  \author{Yuchang Liu \and Yan Jiang \and Mengping Zhang}%
}{}
\makeatother
\crefname{Ex}{Example}{Examples}
\Crefname{Ex}{Example}{Examples}
\crefname{operatorproperty}{property}{properties}
\crefformat{operatorproperty}{#2\textup{(#1)}#3}
\crefrangeformat{operatorproperty}{#3\textup{(#1)}#4--#5\textup{(#2)}#6}
\newsiamthm{assumption}{Assumption}
\crefname{assumption}{Assumption}{Assumptions}
\usepackage{booktabs}
\usepackage{mathtools}
\usepackage{mathrsfs}
\usepackage{tikz}
\usepackage{pgfplots}
\pgfplotsset{compat=1.18}
\usepgfplotslibrary{groupplots}
\usepackage{enumitem}
\title{High-Order Fully Discrete Entropy-Stable DG Methods with Compact Explicit Single-Step Time Discretizations}
\headers{FULLY DISCRETE ENTROPY-STABLE SINGLE-STEP DG}{Y. LIU, Y. JIANG, AND M. ZHANG}
\newcommand{\UU}{\mathcal U}
\newcommand{\FF}{\mathcal F}
\newcommand{\pre}{\mathrm{pre}}
\newcommand{\tar}{\mathrm{tar}}
\newcommand{\loc}{\mathrm{loc}}

\allowdisplaybreaks
\ifpdf\hypersetup{pdftitle={High-Order Fully Discrete Entropy-Stable DG Methods with Compact Explicit Single-Step Time Discretizations},pdfauthor={Yuchang Liu, Yan Jiang, Mengping Zhang}}\fi

\begin{document}
	\maketitle
	\begin{abstract}
		We develop a unified framework for high-order, fully discrete entropy-stable discontinuous Galerkin (DG) methods with compact explicit single-step time discretizations. At the end of each time step, the preliminary DG polynomial is scaled toward its cell average to enforce a strong cell entropy inequality while preserving conservation. This correction is feasible if and only if the updated cell average satisfies a weak cell entropy inequality, which standard third-order strong-stability-preserving Runge--Kutta DG updates can violate at arbitrarily small positive CFL numbers. For compact single-step schemes, including compact Runge--Kutta, Lax--Wendroff, and ADER-DG methods, we derive an explicit interval of feasible numerical entropy fluxes at each interface and prove that it is nonempty under a CFL condition. Projecting a compatible high-order target flux onto this interval guarantees the weak cell entropy inequality and preserves single-step high-order accuracy on smooth solutions, including near nondegenerate extrema. The limiter also enforces multiple entropy inequalities simultaneously. We also establish a Lax--Wendroff-type theorem. Numerical experiments for scalar conservation laws and the compressible Euler equations demonstrate high-order accuracy, multi-entropy control, and robustness for strong shocks.
	\end{abstract}
	\begin{keywords}High-order accuracy, discontinuous Galerkin methods, entropy stability,
		explicit schemes, conservation laws\end{keywords}
	\begin{MSCcodes}65M12, 65M60\end{MSCcodes}

	\tableofcontents
	
	\section{Introduction}
	Hyperbolic conservation laws arise in compressible fluid dynamics,
	multiphase flows, and many related applications. Because their solutions can develop discontinuities in finite time even from smooth initial data, weak solutions are generally nonunique and must be supplemented by entropy admissibility conditions. Discontinuous Galerkin (DG) methods combine local polynomial spaces with conservative numerical fluxes to achieve high-order accuracy in smooth regions while capturing these discontinuities. In particular, the strong-stability-preserving (SSP) Runge--Kutta (RK) DG methods of Cockburn and Shu~\cite{cockburn1989tvb,cockburn1991runge} are widely used for hyperbolic problems.
	
	A central goal in the design of high-order schemes is to enforce discrete entropy inequalities without degrading accuracy on smooth solutions. Tadmor's framework~\cite{tadmor1987numerical,tadmor2003entropy} connects numerical dissipation to entropy production and forms the basis of entropy-conservative and entropy-stable schemes. For semidiscrete scalar DG methods with exact integration, Jiang and Shu~\cite{jiang1994cell} proved a cell entropy inequality for the square entropy. General convex entropies can be treated by combining summation-by-parts
    quadrature rules with entropy-conservative flux
    differencing~\cite{chen2017entropy,chan2018discretely}.
    An alternative approach, proposed by Abgrall~\cite{abgrall2018general},
    introduces local corrections to the spatial residuals to enforce a
    discrete entropy equality or inequality while preserving conservation,
    without requiring special quadrature rules.
    See Chen and Shu~\cite{chen2020review} for a review. Semidiscrete entropy stability, however, does not automatically carry over to explicit high-order time integration.
	
	Fully discrete entropy stability can be achieved via implicit time stepping, such as backward Euler~\cite{tadmor2003entropy}, entropy-variable DG formulations with compatible time integrators~\cite{colombo2023}, or space--time DG discretizations~\cite{hiltebrand2014entropy}. To avoid the coupled nonlinear solves required by implicit methods, relaxation Runge--Kutta schemes~\cite{ranocha2020relaxation} adjust the step completion via a scalar relaxation parameter to preserve global entropy behavior. Enforcing cellwise entropy inequalities or controlling multiple entropies within that framework requires local relaxation extensions with additional scalar root-finding in each cell~\cite{ranocha2020local}; see also Gaburro et al.~\cite{gaburro2023high} for ADER schemes combining spatial entropy corrections with temporal relaxation.
	
	In our previous work~\cite{liu2026limiter,liu2026mhd}, we enforced fully discrete cell entropy inequalities through an explicit, cell-average-preserving scaling of the DG polynomial. For a forward Euler step, a proper numerical entropy flux guarantees a weak cell entropy inequality for the updated cell averages, which provides a feasible upper bound for terminal polynomial scaling. This algebraic limiter requires no nonlinear solver, accommodates any finite family of convex entropy pairs simultaneously, and yields a Lax--Wendroff-type convergence theorem. Related cell-average-preserving scaling strategies have been developed in Wu's entropy--positivity--oscillation (EPO) framework~\cite{wu2026epounifiedframeworkentropy} and applied to relativistic hydrodynamics~\cite{xuwu2026} and compressible magnetohydrodynamics~\cite{caowuyuan2026}.
	
	To achieve high-order temporal accuracy, the limiter-based schemes in~\cite{liu2026limiter,liu2026mhd,caowuyuan2026} rely on SSP multistep time integration. Because an SSP multistep update is a convex combination of forward Euler steps from several preceding time levels, the resulting entropy inequality bounds the new cell entropy only by a convex combination of entropies from multiple past levels, rather than directly between two successive time levels. By contrast, applying the scaling limiter at every internal stage of a classical SSP-RK method enforces a two-level inequality by convexity, but stagewise limiting can degrade spatiotemporal accuracy to first order~\cite{liu2026limiter,wu2026epounifiedframeworkentropy}. We therefore seek explicit single-step DG methods that satisfy a two-level strong cell entropy inequality through a single cell-average-preserving correction at the end of each time step.
	
	By Jensen's inequality, the entropy of the preliminary cell average is the minimum cell entropy achievable under any cell-average-preserving correction. On a periodic domain, if the total entropy of the preliminary cell averages exceeds the total discrete entropy at the beginning of the current time step, no terminal cell-average-preserving limiter can prevent a global entropy increase. We show that standard SSP-RK3 DG methods with unlimited internal stages can violate this necessary condition at arbitrarily small positive CFL numbers. In our counterexamples, successive DG stages propagate interface slopes across multiple cells within a single step, producing an overshoot in a cell not adjacent to the initial discontinuity.
	
	This obstruction motivates the use of compact explicit single-step DG discretizations whose full-step update in each cell depends only on that cell and its immediate face neighbors. A prime example is the compact RKDG (cRKDG) method of Chen, Sun, and Xing~\cite{chen2023}, which replaces the inter-element DG operator in internal stages with a cell-local derivative projection and evaluates numerical fluxes only in the final stage. In a concurrent independent preprint, Sun~\cite{sun2026fully} studied entropy-stable cRKDG schemes with nonnegative stage weights using a direct stage-averaged Lax--Friedrichs entropy flux, leaving accuracy preservation open when the limiter is active.
    
    In this work, we develop a unified entropy-stabilization framework for general compact explicit single-step DG methods, encompassing cRKDG, Lax--Wendroff DG (LWDG)~\cite{qiu2005}, and ADER-DG schemes based on local space--time predictors~\cite{dumbser2008,busto2020}. At each cell interface, we use the actual full-step conservative flux together with the old cell averages and discrete cell entropies to define an interval of feasible numerical entropy fluxes. Under suitable assumptions, we prove that these feasible intervals are nonempty under a CFL restriction. Projecting a compatible high-order target entropy flux onto the feasible interval then guarantees the weak cell entropy inequality needed for terminal scaling.
	
	The main contributions of this paper are summarized as follows:
	\begin{itemize}[leftmargin=*]
		\item We construct a family of counterexamples for standard SSP-RK3 DG showing that, at any prescribed sufficiently small positive CFL number, the full-step cell averages can violate global entropy nonincrease, ruling out any terminal cell-average-preserving entropy correction.
		
		\item We derive explicit intervals of feasible numerical entropy fluxes from the full-step conservative flux for general compact single-step DG methods, including cRKDG, LWDG, and ADER-DG, and prove that they are nonempty for one-dimensional scalar conservation laws under a CFL condition. Combined with terminal scaling, this yields conservation- and compactness-preserving schemes satisfying two-level strong cell entropy inequalities for one or more prescribed convex entropies.
		
		\item By projecting a compatible high-order target flux onto the feasible interval and constructing a unified Lax--Friedrichs target, we prove $(k+1)$-th order single-step local accuracy preservation for $k\ge2$ both away from critical points and near nondegenerate smooth extrema.
	\end{itemize}
	
	The remainder of the paper is organized as follows. \Cref{sec:entropy} formulates the fully discrete cell entropy conditions and establishes the SSP-RK3 counterexample. \Cref{sec:framework} presents the compact single-step framework, proves the nonemptiness of the feasible intervals and local accuracy preservation, and details the cRKDG, LWDG, and ADER-DG realizations. \Cref{sec:experiments} reports numerical experiments for scalar equations and the Euler system, and \cref{sec:conclusions} concludes the paper.
	
	\section{Entropy analysis and an obstruction to terminal correction}\label{sec:entropy}
	Following~\cite{liu2026limiter}, we review the continuous and fully discrete entropy conditions, formulate the cell-average feasibility requirement for terminal scaling, and demonstrate why standard multi-stage RKDG updates can violate this requirement.
	
\subsection{Conservation laws and entropy inequalities}
Consider a hyperbolic system of conservation laws on a periodic domain $\Omega\subset\mathbb R^d$:
\begin{equation}\label{eq:systemprelim}
 \partial_t\mathbf u+\sum_{j=1}^d\partial_{x_j}\mathbf f_j(\mathbf u)=\mathbf0,
 \qquad\mathbf u(\mathbf x,0)=\mathbf u_0(\mathbf x),
\end{equation}
where $\mathbf u$ takes values in a convex admissible state set $\mathcal D\subset\mathbb R^m$.

\begin{definition}[Entropy pair]
A convex function $\UU\in C^2(\mathcal D)$, together with entropy fluxes $\FF_j:\mathcal D\to\mathbb R$, forms an entropy pair for \cref{eq:systemprelim} if
$\nabla\UU(\mathbf u)D\mathbf f_j(\mathbf u)=\nabla\FF_j(\mathbf u)$ for $j=1,\ldots,d$.
\end{definition}

Let $\mathbf v=\nabla\UU(\mathbf u)^{\mathrm T}$ denote the entropy variables and $\boldsymbol\FF=(\FF_1,\ldots,\FF_d)^{\mathrm T}$. Smooth solutions of \cref{eq:systemprelim} satisfy the additional conservation law $\partial_t\UU(\mathbf u)+\nabla_{\mathbf x}\cdot\boldsymbol\FF(\mathbf u)=0$, whereas admissible weak solutions satisfy $\partial_t\UU(\mathbf u)+\nabla_{\mathbf x}\cdot\boldsymbol\FF(\mathbf u)\le0$ in the sense of distributions; that is, for every nonnegative smooth periodic test function $\varphi$ with compact time support,
\begin{equation}\label{eq:weakentropy}
 \int_0^\infty\!\int_\Omega
 [\UU(\mathbf u)\varphi_t+\nabla_{\mathbf x}\varphi\cdot\boldsymbol\FF(\mathbf u)]
 \,\mathrm d\mathbf x\,\mathrm dt
 +\int_\Omega\UU(\mathbf u_0)\varphi(\mathbf x,0)\,\mathrm d\mathbf x\ge0.
\end{equation}
On a periodic domain $\Omega$, this entropy inequality implies that the total entropy is nonincreasing in time:
$\int_\Omega\UU(\mathbf u(\mathbf x,t_2))\,\mathrm d\mathbf x
\le\int_\Omega\UU(\mathbf u(\mathbf x,t_1))\,\mathrm d\mathbf x$ for $t_2\ge t_1$. Our goal is to preserve it in the discrete sense. 

Theoretical analysis focuses on the one-dimensional scalar conservation law
\begin{equation}\label{eq:scalarprelim}
 u_t+f(u)_x=0,\qquad u(x,0)=u_0(x),
\end{equation}
for which $\FF'(u)=\UU'(u)f'(u)$, with entropy variable $v=\UU'$ and entropy potential $\psi=vf-\FF$.

\subsection{DG approximations and fully discrete entropy stability}
	Partition the one-dimensional periodic domain $\Omega$ into uniform cells $I_i=[x_{i-1/2},x_{i+1/2}]$ of width $h$, and define the broken polynomial space
	\[
	V_h^k=\{w:w|_{I_i}\in \mathbb{P}^k(I_i)\text{ for every }i\}.
	\]
	The standard semidiscrete DG method seeks $u_h\in V_h^k$ such that, on each cell $I_i$,
	\begin{equation}\label{eq:dgprelim}
		\int_{I_i}\partial_tu_h\,w\,\mathrm{d}x
		=\int_{I_i}f(u_h)w_x\,\mathrm{d}x
		-\hat f_{i+1/2}w^-_{i+1/2}
		+\hat f_{i-1/2}w^+_{i-1/2},
		\qquad w\in \mathbb{P}^k(I_i),
	\end{equation}
	where $\hat f_{i+1/2}=\hat f\bigl(u_h^-(x_{i+1/2}),u_h^+(x_{i+1/2})\bigr)$ is a consistent monotone numerical flux~\cite{shu2009discontinuous}, and superscripts $-$ and $+$ denote left and right interface traces. We write $\{a\}=(a^-+a^+)/2$ and $[a]=a^+-a^-$ for the interface average and jump, and denote the spatial DG operator defined by \cref{eq:dgprelim} by $\mathcal L^{\mathrm{DG}}$. A canonical choice is the Lax--Friedrichs (LF) flux
	\begin{equation}\label{eq:LF}
		\hat f(a,b)=\dfrac{f(a)+f(b)}2-\dfrac\alpha2(b-a),
	\end{equation}
	where $\alpha>0$ bounds $|f'(u)|$.
	
	Let $u_h^n\in V_h^k$ denote the numerical solution at $t^n$, with exact cell average
	\begin{equation}\label{eq:cellmeanexact}
		\bar u_i^n=\dfrac1h\int_{I_i}u_h^n(x)\,\mathrm{d}x.
	\end{equation}
	On each cell $I_i$, we introduce an $N_{\mathrm{G}}$-point Gauss--Legendre quadrature rule with nodes $x_{i,\ell}$ and normalized weights $w_\ell>0$ with $\sum_{\ell=1}^{N_{\mathrm{G}}}w_\ell=1$. Throughout the paper we take $N_{\mathrm{G}}\ge k+1$, so that the quadrature is exact on $\mathbb P^{2k}(I_i)$, and define the discrete cell entropy by
	\begin{equation}\label{eq:oldentropynotation}
		\widetilde{\UU}_i^n
		=\sum_{\ell=1}^{N_{\mathrm{G}}}w_\ell\UU\bigl(u_h^n(x_{i,\ell})\bigr),\qquad
		\bar u_i^n=\sum_{\ell=1}^{N_{\mathrm{G}}}w_\ell u_h^n(x_{i,\ell}).
	\end{equation}
	Because the weights are positive and sum to one, Jensen's inequality gives $\UU(\bar u_i^n)\le\widetilde{\UU}_i^n$. Unlike the Gauss--Lobatto quadrature required in~\cite{liu2026limiter}, the quadrature nodes in \cref{eq:oldentropynotation} need not include the cell endpoints $x_{i\pm1/2}$.
	
	Let $\Delta t=t^{n+1}-t^n$ and $\lambda=\Delta t/h$. A single-step explicit method, such as forward Euler with $u_h^{n+1,(\pre)}=u_h^n+\Delta t\,\mathcal L^{\mathrm{DG}}(u_h^n)$, advances $u_h^n$ to a preliminary polynomial $u_h^{n+1,(\pre)}\in V_h^k$ whose cell average satisfies the conservative update
	\begin{equation}\label{eq:mean}
		\bar u_i^{n+1}=\bar u_i^n
		-\lambda(\hat f_{i+1/2}-\hat f_{i-1/2}),
	\end{equation}
	where $\hat f_{i+1/2}$ is the effective time-averaged conservative interface flux over the full step. We now review the weak-to-strong framework in \cite{liu2026limiter, wu2026epounifiedframeworkentropy}. 
	
	\begin{definition}[Strong cell entropy inequality]
		A DG update satisfies the fully discrete strong cell entropy inequality for $(\UU,\FF)$ if there exists a shared numerical entropy flux $\hat{\FF}$, consistent with $\FF$, such that
		\begin{equation}\label{eq:desiredcellentropy}
			\widetilde{\UU}_i^{n+1}\le
			\widetilde{\UU}_i^n
			-\lambda(\hat{\FF}_{i+1/2}-\hat{\FF}_{i-1/2})
		\end{equation}
		for every cell.
	\end{definition}
	
However, $u_h^{n+1,(\pre)}$ may violate \cref{eq:desiredcellentropy}. We then apply a terminal correction that preserves its cell average by scaling toward $\bar u_i^{n+1}=h^{-1}\int_{I_i}u_h^{n+1,(\pre)}\,\mathrm{d}x$~\cite{zhang2010positivity,zhang2012maximum,wu2018provably,liu2026limiter,wu2026epounifiedframeworkentropy}:
\begin{equation}\label{eq:scalingprelim}
 u_h^{n+1}|_{I_i}=\bar u_i^{n+1}+\theta_i\bigl(u_h^{n+1,(\pre)}-\bar u_i^{n+1}\bigr),\qquad 0\le\theta_i\le1.
\end{equation}
Define
\begin{equation}\label{eq:threeentropiesprelim}
\begin{gathered}
 \UU_i^{\mathrm{1st}}=\UU(\bar u_i^{n+1}),\qquad
 \UU_i^{\mathrm{high}}=\sum_{\ell=1}^{N_{\mathrm{G}}}w_\ell\UU\bigl(u_h^{n+1,(\pre)}(x_{i,\ell})\bigr),\\
 \UU_i^{\mathrm{up}}=\widetilde{\UU}_i^n-\lambda(\hat{\FF}_{i+1/2}-\hat{\FF}_{i-1/2}).
\end{gathered}
\end{equation}
Jensen's inequality and convexity give
\[
 \UU_i^{\mathrm{1st}}\le\widetilde{\UU}_i^{n+1}
 \le(1-\theta_i)\UU_i^{\mathrm{1st}}+\theta_i\UU_i^{\mathrm{high}}.
\]
The lower bound holds for any cell-average-preserving correction and is attained by \cref{eq:scalingprelim} at $\theta_i=0$. Thus a scaling factor enforcing \cref{eq:desiredcellentropy} exists if and only if the updated cell average satisfies the \textit{weak cell entropy inequality}
\begin{equation}\label{eq:localfeasibilityprelim}
 \UU_i^{\mathrm{1st}}\le\UU_i^{\mathrm{up}}.
\end{equation}
Whenever this condition holds, setting $\theta_i=1$ if $\UU_i^{\mathrm{high}}\le\UU_i^{\mathrm{up}}$ and $\theta_i=(\UU_i^{\mathrm{up}}-\UU_i^{\mathrm{1st}})/(\UU_i^{\mathrm{high}}-\UU_i^{\mathrm{1st}})$ otherwise yields \cref{eq:desiredcellentropy}. This framework reduces the enforcement of the strong cell entropy inequality to finding a shared numerical entropy flux satisfying \cref{eq:localfeasibilityprelim}.
	Summing \cref{eq:desiredcellentropy} over a periodic mesh cancels the shared interface fluxes and yields global entropy stability:
	\begin{equation}\label{eq:globalES}
		\sum_i h\widetilde{\UU}_i^{n+1}\le\sum_i h\widetilde{\UU}_i^n.
	\end{equation}
	Similarly, summing the weak cell entropy inequality \cref{eq:localfeasibilityprelim} yields a necessary global condition for the existence of any feasible shared entropy flux.
	
	\begin{corollary}[Necessary global condition]\label{prop:necessary}
		On a periodic mesh, if a shared numerical entropy flux satisfies $\UU_i^{\mathrm{1st}}\le\UU_i^{\mathrm{up}}$ in every cell, then
		\begin{equation}\label{eq:necessaryglobal}
			\sum_i h\UU(\bar u_i^{n+1})\le\sum_i h\widetilde{\UU}_i^n.
		\end{equation}
	\end{corollary}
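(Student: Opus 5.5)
The plan is a direct summation argument, mirroring the derivation of \cref{eq:globalES} from \cref{eq:desiredcellentropy}. Suppose a shared numerical entropy flux $\hat{\FF}_{i+1/2}$ is given, with $\UU_i^{\mathrm{1st}}\le\UU_i^{\mathrm{up}}$ in every cell. Recalling the definitions in \cref{eq:threeentropiesprelim}, this hypothesis reads
\[
\UU(\bar u_i^{n+1})\le\widetilde{\UU}_i^n-\lambda\bigl(\hat{\FF}_{i+1/2}-\hat{\FF}_{i-1/2}\bigr)
\qquad\text{for every } i.
\]
I will multiply each of these inequalities by $h>0$ and sum over all cells of the periodic mesh.

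The key step is the telescoping of the flux differences. The flux is shared: the value $\hat{\FF}_{i+1/2}$ used in cell $I_i$ is the same as the one used in cell $I_{i+1}$. Periodicity identifies the first and last interfaces. Together these give
\[
\sum_i\bigl(\hat{\FF}_{i+1/2}-\hat{\FF}_{i-1/2}\bigr)=0.
\]
The summed inequality therefore reduces exactly to \cref{eq:necessaryglobal}. The mesh is finite, so no convergence issues arise.

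There is no genuine obstacle here. The only points requiring care are that the flux is single-valued at each interface and that the index shift is valid under periodic wrap-around.

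It is also worth recording how the corollary is used. By Jensen's inequality, $\UU(\bar u_i^{n+1})$ is the smallest cell entropy achievable by any cell-average-preserving correction. Hence a violation of \cref{eq:necessaryglobal} rules out every terminal correction of the form \cref{eq:scalingprelim}, and indeed every cell-average-preserving one. This is precisely the role the corollary plays in the SSP-RK3 counterexample.
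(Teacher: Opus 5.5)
Your proposal is correct and follows the paper's proof: you multiply each weak cell entropy inequality by $h$, sum over the periodic mesh, and use that the shared interface entropy fluxes telescope to zero.
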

	\begin{proof}
		Summing \cref{eq:localfeasibilityprelim} over the periodic mesh gives
		\[
		\sum_i h\UU_i^{\mathrm{1st}}
		\le\sum_i h\UU_i^{\mathrm{up}}
		=\sum_i h\widetilde{\UU}_i^n,
		\]
		since the shared numerical entropy fluxes cancel.
	\end{proof}
    
	Indeed, since $\bar u_i^{n+1}$ is fixed by the conservative preliminary update \cref{eq:mean}, no choice of numerical entropy fluxes and scaling factors $\theta_i\in[0,1]$ can achieve \cref{eq:desiredcellentropy} if \cref{eq:necessaryglobal} is violated.
	
	\subsection{A counterexample for the standard SSP-RK3 DG method} 
	Under a suitable CFL condition, a single forward Euler DG step satisfies the weak cell entropy inequality \cref{eq:localfeasibilityprelim} with a proper numerical entropy flux~\cite{liu2026limiter}. Although standard SSP-RK methods are convex combinations of forward Euler steps, applying the scaling limiter after each internal stage can reduce the accuracy to first order, whereas leaving the internal stages unlimited can cause the full-step update to violate the necessary global condition \cref{eq:necessaryglobal}.
	
	To see this, apply the classical three-stage SSP-RK3 scheme to \cref{eq:dgprelim} without internal stage limiting:
	\begin{align*}
		u_h^{(1)}&=u_h^n+\Delta t\mathcal L^{\mathrm{DG}}(u_h^n),\\
		u_h^{(2)}&=\dfrac34u_h^n+\dfrac14\bigl(u_h^{(1)}+
		\Delta t\mathcal L^{\mathrm{DG}}(u_h^{(1)})\bigr),\\
		u_h^{n+1,(\pre)}&=\dfrac13u_h^n+\dfrac23\bigl(u_h^{(2)}+
		\Delta t\mathcal L^{\mathrm{DG}}(u_h^{(2)})\bigr).
	\end{align*}
	Consider Burgers' equation with $f(u)=u^2/2$ and the exponential entropy pair $\UU(u)=e^u$, $\FF(u)=(u-1)e^u$, discretized by $\mathbb P^1$ DG with exact volume integration on a uniform periodic mesh $I_i=[(i-1)h,ih]$ of $[0,1]$ with $h=1/N$ and even $N\ge12$. For any prescribed CFL number $0<\nu\le3/8$, set
	\[
	s=\dfrac{2\nu}{3}\in\left(0,\dfrac14\right],\qquad
	A=\dfrac{8}{s^2}=\dfrac{18}{\nu^2}\ge128,\qquad
	\alpha=\dfrac{3A}{2},\qquad
	\Delta t=\dfrac{\nu h}{\alpha}=\dfrac{sh}{A},
	\]
	and take the piecewise constant initial data
	\[
	u_h^0(x)=\begin{cases}
		0,&0\le x<1/2,\\
		A,&1/2\le x<1,
	\end{cases}
	\]
	using the LF flux \cref{eq:LF} with the same parameter $\alpha$ at all interfaces and stages.
	
	Writing any stage polynomial on $I_i$ in the local coordinate $\xi=2(x-x_i)/h\in[-1,1]$ as $w|_{I_i}=m_i+b_i\xi$, testing \cref{eq:dgprelim} with $1$ and $\xi$ yields the exact $\mathbb P^1$ modal evolution equations
	\[
	h\frac{\mathrm{d}m_i}{\mathrm{d}t}=-(\hat f_{i+1/2}-\hat f_{i-1/2}),\qquad
	h\frac{\mathrm{d}b_i}{\mathrm{d}t}=3m_i^2+b_i^2-3(\hat f_{i+1/2}+\hat f_{i-1/2}),
	\]
	where the interface fluxes are evaluated at the traces $m_i\pm b_i$. Across the three stages, the slopes generated in the cells adjacent to the upward jump at $x=1/2$ propagate into the second cell to the right of the jump, $I_{N/2+2}$. Explicitly evaluating the three stages yields
	\begin{equation}\label{eq:overshoot}
		\bar u_{N/2+2}^{1}=A\bigl(1+\delta(s)\bigr)>A+2,
	\end{equation}
	where, for $0<s\le1/4$,
	\[
	\begin{aligned}
		\delta(s)&=\frac54s^2-\frac{41}{12}s^3-\frac{23}{12}s^4+\frac{125}{24}s^5+\frac32s^6+\frac16s^7\\
		&>\left(\frac54-\frac{41}{48}-\frac{23}{192}\right)s^2=\frac{53}{192}s^2>\frac{s^2}{4}.
	\end{aligned}
	\]
	Furthermore, all interface traces entering the three stage evaluations remain in $[-A/2,3A/2]$, so $\alpha=3A/2$ is a valid wave-speed bound throughout the step.
	
	Because each $\mathcal L^{\mathrm{DG}}$ evaluation couples a cell only to its immediate neighbors, three stages can modify at most the twelve cells within three cells of the two initial jumps:
	\[
	\mathcal S
	=\{1,2,3,N-2,N-1,N\}
	\cup
	\left\{\frac{N}{2}-2,\ldots,\frac{N}{2}+3\right\},
	\]
	consisting of six cells with initial value $0$ and six with initial value $A$, as shown in \cref{fig:rk3-affected-cells}.
	
	\begin{figure}[!htbp]
		\centering
		\begin{tikzpicture}[x=0.65cm,y=0.70cm,font=\small,line width=0.4pt]
			\foreach \x in {0,1,2,4.5,5.5,6.5,7.5,8.5,9.5,12,13,14} {
				\fill[gray!15] (\x,0) rectangle ++(1,0.22);
			}
			\foreach \a/\b in {0/3,4.5/10.5,12/15} {
				\draw (\a,0)--(\b,0);
			}
			\foreach \x in {0,1,2,3,4.5,5.5,6.5,7.5,8.5,9.5,10.5,12,13,14,15} {
				\draw (\x,0)--(\x,0.38);
			}
			\node at (3.75,0.18) {$\cdots$};
			\node at (11.25,0.18) {$\cdots$};
			\draw[fill=gray!45,line width=0.9pt] (8.5,0) rectangle (9.5,0.38);
			\draw[->,line width=0.6pt] (9,1.25)--(9,0.44);
			\node[anchor=south] at (9,1.27) {$I_{N/2+2}$};
			\draw[line width=1pt] (0,1)--(3,1);
			\draw[line width=1pt] (4.5,1)--(7.5,1)--(7.5,2.6)--(10.5,2.6);
			\draw[line width=1pt] (12,2.6)--(15,2.6);
			\node at (3.75,1) {$\cdots$};
			\node at (11.25,2.6) {$\cdots$};
			\node[anchor=east] at (-0.15,1) {$0$};
			\node[anchor=west] at (15.15,2.6) {$A$};
			\draw[densely dashed,gray] (7.5,0.38)--(7.5,1);
			\node[anchor=north] at (0,-0.12) {$x=0$};
			\node[anchor=north] at (7.5,-0.12) {$x=1/2$};
			\node[anchor=north] at (15,-0.12) {$x=1$};
			\foreach \a/\b in {0/3,4.5/10.5,12/15} {
				\draw (\a,-0.68)--(\a,-0.88)--(\b,-0.88)--(\b,-0.68);
			}
			\node[anchor=north] at (7.5,-1.02) {$\mathcal S$};
		\end{tikzpicture}
		\caption{Initial data and the twelve potentially affected cells,
			$\mathcal S$. The dark cell is $I_{N/2+2}$, whose updated cell average
			exceeds $A+2$. The endpoints are identified periodically; dots indicate
			omitted constant regions.}
		\label{fig:rk3-affected-cells}
	\end{figure}
	
	Since $A\ge128$ and $e^2>7$, the single cell $I_{N/2+2}\in\mathcal S$ has more entropy after the step than all twelve cells in $\mathcal S$ combined initially:
	\[
	\sum_{i\in\mathcal S}h\UU(\bar u_i^{1})
	>he^{A+2}>7he^A>h(6+6e^A)=\sum_{i\in\mathcal S}h\widetilde{\UU}_i^0.
	\]
	Since all cells outside $\mathcal S$ are unchanged, $\sum_i h\UU(\bar u_i^{1})>\sum_i h\widetilde{\UU}_i^0$, violating \cref{prop:necessary}.
	
	\subsection{Compact RKDG and the preceding counterexample}\label{sec:crkstructure}
	The overshoot in $I_{N/2+2}$ above is driven by the expansion of the spatial stencil across successive RK stages. In the compact RKDG (cRKDG) method of Chen, Sun, and Xing~\cite{chen2023}, the inter-cell DG operator in internal stages is replaced by the cell-local operator
	\[
	\mathcal L^{\loc}w=-\Pi_k\partial_xf(w),
	\]
	where $\Pi_k$ is the cellwise $L^2$ projection onto $\mathbb{P}^k$ and $\partial_x$ is taken within each cell. With explicit RK coefficients $a_{r\ell}$ and $b_r$ satisfying $\sum_{r=1}^R b_r=1$, the stages and preliminary update read
	\begin{align*}
		u_h^{(r)}&=u_h^n+\Delta t\sum_{\ell<r}a_{r\ell}
		\mathcal L^{\loc}u_h^{(\ell)},\qquad r=1,\ldots,R,\\
		u_h^{n+1,(\pre)}&=u_h^n+\Delta t\sum_{r=1}^R b_r
		\mathcal L^{\mathrm{DG}}u_h^{(r)}.
	\end{align*}
	Here each internal stage $u_h^{(r)}|_{I_i}$ depends solely on $u_h^n|_{I_i}$, while neighboring cells interact only through the final conservative update. For third-order cRKDG, we use Heun's coefficients $a_{21}=1/3$, $a_{32}=2/3$, and $(b_1,b_2,b_3)=(1/4,0,3/4)$.
	
	For the piecewise constant initial data in the counterexample, $\mathcal L^{\loc}u_h^n=0$ everywhere, so $u_h^{(r)}=u_h^n$ for all $r$ and the preliminary cRKDG update reduces to a single forward Euler step:
	\[
	u_h^{n+1,(\pre)}=u_h^n+\Delta t\,\mathcal L^{\mathrm{DG}}u_h^n.
	\]
	Only the four cells immediately adjacent to the two jumps change their averages:
	\[
	\begin{aligned}
		\bar u_1^{n+1}&=\dfrac{2\nu A}{3},
		&\bar u_{N/2+1}^{n+1}&=\left(1-\dfrac{2\nu}{3}\right)A,\\
		\bar u_{N/2}^{n+1}&=\dfrac{\nu A}{3},
		&\bar u_N^{n+1}&=\left(1-\dfrac{\nu}{3}\right)A.
	\end{aligned}
	\]
	Since $0<\nu\le3/8$, each pair has the form $\theta A$ and $(1-\theta)A$ with $\theta\in[0,1]$, so convexity gives $\UU(\theta A)+\UU((1-\theta)A)\le\UU(0)+\UU(A)$ and the necessary global condition \cref{eq:necessaryglobal} holds. 
	
	Motivated by this property, Sun~\cite{sun2026fully} recently studied an entropy-stable cRKDG scheme using a direct stage-averaged entropy flux, which is restricted to cRKDG with nonnegative weights and lacks an accuracy guarantee, as shown in \cref{rem:rk-averaged-flux}. Here we develop a unified framework for general compact single-step schemes with provable accuracy preservation.
	
	\section{A unified framework for compact explicit single-step DG methods}\label{sec:framework}
	We now establish cellwise entropy stability and local accuracy preservation for a general class of compact explicit single-step DG methods. Throughout this section, $C>0$ denotes a generic constant independent of $h$ and $\Delta t$ and uniform over the stated CFL range.
	
	\subsection{Compact explicit single-step DG methods}\label{sec:compactstructure}
	
	First, we give a precise definition of the compact explicit single-step DG schemes studied in this work.
	
	\begin{definition}\label{def:compact}
		A fully discrete DG method is called a compact explicit single-step method if, for every $w\in V_h^k$, its preliminary solution $u_h^{n+1,(\pre)}$ satisfies
		\begin{equation}\label{eq:compactstructure}
			\begin{aligned}
				\int_{I_i}u_h^{n+1,(\pre)}w\,\mathrm{d}x
				={}&\int_{I_i}u_h^n w\,\mathrm{d}x
				+\Delta t\int_{I_i}f_h^\star\,\partial_xw\,\mathrm{d}x\\
				&-\Delta t\left(
				\hat f_{i+1/2}w^-_{i+1/2}
				-\hat f_{i-1/2}w^+_{i-1/2}\right),
			\end{aligned}
		\end{equation}
		where the effective volume flux $f_h^\star$ and interface flux $\hat f_{i+1/2}$ satisfy the following requirements:
		\begin{enumerate}[leftmargin=*]
			\item {Cell-local volume flux:} $f_h^\star=f_h^\star(u_h^n|_{I_i})$ depends only on the old polynomial in $I_i$, and $f_h^\star=f(c)$ whenever $u_h^n|_{I_i}\equiv c$.
			
			\item {Two-cell interface flux:} $\hat f_{i+1/2}=\hat f_{i+1/2}(u_h^n|_{I_i},u_h^n|_{I_{i+1}})$ depends only on the old polynomials in the two adjacent cells, and $\hat f_{i+1/2}=f(c)$ whenever $u_h^n|_{I_i\cup I_{i+1}}\equiv c$.
			
			\item {High-order consistency:} For any smooth exact solution $u(x,t)$, if $\|u_h^n-u(\cdot,t_n)\|_{L^\infty(I_j)}\le Ch^{k+1}$ on the relevant cells, then
			\[
			\begin{aligned}
				\left\|f_h^\star-\frac1{\Delta t}
				\int_{t_n}^{t_{n+1}}f\bigl(u(\cdot,t)\bigr)\,\mathrm{d}t
				\right\|_{L^\infty(I_i)}
				&\le Ch^{k+1},\\
				\left|\hat f_{i+1/2}-\frac1{\Delta t}
				\int_{t_n}^{t_{n+1}}f\bigl(u(x_{i+1/2},t)\bigr)\,\mathrm{d}t
				\right|
				&\le Ch^{k+1}.
			\end{aligned}
			\]
		\end{enumerate}
	\end{definition}
	
	\begin{remark}
		The cRKDG scheme of \cref{sec:crkstructure} fits \cref{def:compact} with
		\begin{equation}\label{eq:cRKflux}
			f_h^\star=\sum_{r=1}^R b_r f(u_h^{(r)}),\qquad
			\hat f_{i+1/2}=\sum_{r=1}^R b_r
			\hat f\bigl(u_h^{(r)}(x_{i+1/2}^-),u_h^{(r)}(x_{i+1/2}^+)\bigr),
		\end{equation}
		since each internal stage $u_h^{(r)}|_{I_i}$ is computed locally from $u_h^n|_{I_i}$. \Cref{def:compact} also accommodates local space--time predictors obtained via cell-local implicit solves, as in ADER-DG methods. 
	\end{remark}
	
	Taking $w=1$ in \cref{eq:compactstructure} recovers the conservative cell-average update \cref{eq:mean}. Assume $f\in C^1$, $\UU\in C^2$, and uniform convexity on the state range under consideration:
	\begin{equation}\label{eq:entropycurvature}
		0<\mu\le\UU''(u)\le L.
	\end{equation}
	To compare the high-order interface flux $\hat f_{i+1/2}$ with a first-order baseline, we define the reference interface flux evaluated at the old cell averages:
	\[
	\hat g_{i+1/2}:=\hat f(\bar u_i^n,\bar u_{i+1}^n).
	\]
	Pulling $u_h^n|_{I_i}$ back to the reference interval $\widehat I=[-1,1]$ via $\hat u_i(\xi)=u_h^n(x_i+\frac h2\xi)\in\mathscr P:=\mathbb P^k(\widehat I)$, where $x_i$ is the center of $I_i$, we write the full-step interface flux in reference variables as
	\[
	\hat f_{i+1/2}=\Phi_\lambda(\hat u_i,\hat u_{i+1}),\qquad
	\Phi_\lambda:\mathscr P\times\mathscr P\longrightarrow\mathbb R.
	\]
	We require $\hat f$ and $\Phi_\lambda$ to satisfy the following three properties:
	\begin{enumerate}
		\renewcommand{\theenumi}{P\arabic{enumi}}
		\renewcommand{\labelenumi}{(\theenumi)}
		
		\item\label[operatorproperty]{cond:reference} \emph{Strict entropy dissipation.} There exists $\eta>0$ independent of the state pair such that
		\[
		[\psi]-[v]\hat g_{i+1/2}
		\ge\eta(\bar u_{i+1}^n-\bar u_i^n)^2,
		\]
		where $[v]=v(\bar u_{i+1}^n)-v(\bar u_i^n)$ and $[\psi]=\psi(\bar u_{i+1}^n)-\psi(\bar u_i^n)$.
		
		\item\label[operatorproperty]{cond:flux-lipschitz}
		\emph{Uniform Lipschitz continuity on bounded sets.}
		Fix $\lambda_*>0$. For each bounded subset of $\mathscr P$, there exists $C_\Phi\ge0$ such that for all $p,q,\tilde p,\tilde q$ in that subset and $0\le\lambda\le\lambda_*$,
		\[
		\bigl|\Phi_\lambda(p,q)-\Phi_\lambda(\tilde p,\tilde q)\bigr|
		\le C_\Phi\bigl(\|p-\tilde p\|_{L^\infty(\widehat I)}
		+\|q-\tilde q\|_{L^\infty(\widehat I)}\bigr).
		\]
		
		\item\label[operatorproperty]{cond:constant-reduction}
		\emph{Reduction on piecewise constant data.}
		For constant polynomials $a,b\in\mathscr P$, $\Phi_\lambda(a,b)=\hat f(a,b)$.
	\end{enumerate}
	
	\begin{remark}
		Property~\cref{cond:reference} strengthens Tadmor's entropy-stability condition $[\psi]-[v]\hat g_{i+1/2}\ge0$~\cite{tadmor2003entropy} to a quadratic lower bound in the state jump, which holds for standard monotone fluxes with nonvanishing numerical viscosity. For instance, the LF flux \cref{eq:LF} satisfies~\cref{cond:reference} whenever $\alpha>0$ strictly bounds the wave speed, and the modified Godunov flux $\hat f_G^{\mathrm{mod}}(a,b)=\hat f_G(a,b)-\frac{\beta}{2}(b-a)$ with $\beta>0$ satisfies~\cref{cond:reference} for $\eta=\mu\beta/2$.
		Moreover, standard SSP-RKDG schemes generally satisfy neither the
        two-cell Lipschitz property \cref{cond:flux-lipschitz} nor the
        constant-reduction property \cref{cond:constant-reduction}, because
        their internal stages couple neighboring cells.
	\end{remark}
	
\begin{proposition}[Uniform flux bounds]\label{prop:flux-perturbation}
 For reference-cell polynomials in a fixed $L^\infty$-bounded set, define
 $\delta_i^2=\sum_{\ell=1}^{N_{\mathrm{G}}}w_\ell\bigl(u_h^n(x_{i,\ell})-\bar u_i^n\bigr)^2$.
 Properties \crefrange{cond:flux-lipschitz}{cond:constant-reduction} imply that there exists $M_0\ge0$ such that
 \begin{equation}\label{eq:perturb}
 \begin{aligned}
  |\hat f_{i+1/2}-\hat g_{i+1/2}|&\le M_0(\delta_i+\delta_{i+1}),\\
  |\hat g_{i+1/2}-f(\bar u_j^n)|&\le M_0|\bar u_{i+1}^n-\bar u_i^n|,\qquad j=i,i+1.
 \end{aligned}
 \end{equation}
\end{proposition}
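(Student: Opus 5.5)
The first bound will follow from comparing $\hat f_{i+1/2}$ with the flux evaluated on the cell averages. By \cref{cond:constant-reduction},
\[
\hat g_{i+1/2}=\hat f(\bar u_i^n,\bar u_{i+1}^n)=\Phi_\lambda(\bar u_i^n,\bar u_{i+1}^n),
\]
where $\bar u_i^n$ is viewed as a constant polynomial in $\mathscr P$. Since $|\bar u_i^n|\le\|\hat u_i\|_{L^\infty(\widehat I)}$, the constants lie in the same bounded set as the reference polynomials; we may enlarge that set to its convex hull if needed. Then \cref{cond:flux-lipschitz} gives
\[
|\hat f_{i+1/2}-\hat g_{i+1/2}|\le C_\Phi\bigl(\|\hat u_i-\bar u_i^n\|_{L^\infty(\widehat I)}+\|\hat u_{i+1}-\bar u_{i+1}^n\|_{L^\infty(\widehat I)}\bigr).
\]

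The remaining task for the first bound is to control $\|\hat u_i-\bar u_i^n\|_{L^\infty}$ by $\delta_i$, and this is a norm-equivalence argument on the finite-dimensional space $\mathscr P$. The map
\[
p\mapsto\Bigl(\sum_\ell w_\ell\, p(\xi_\ell)^2\Bigr)^{1/2}
\]
is a norm on $\mathscr P=\mathbb P^k(\widehat I)$, because $N_{\mathrm G}\ge k+1$ distinct nodes with positive weights determine a polynomial of degree $k$. Applying this to $p=\hat u_i-\bar u_i^n$ gives $\|\hat u_i-\bar u_i^n\|_{L^\infty}\le C_k\delta_i$. The constant $C_k$ depends only on $k$ and the quadrature rule, and is independent of $h$ because everything lives on the reference cell. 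Taking $M_0\ge C_\Phi C_k$ then yields the first inequality.

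For the second bound, we use consistency: $f(\bar u_j^n)=\hat f(\bar u_j^n,\bar u_j^n)$, which follows from \cref{cond:constant-reduction} together with item 2 of \cref{def:compact}. Take $j=i$ and write
\[
\hat g_{i+1/2}-f(\bar u_i^n)=\Phi_\lambda(\bar u_i^n,\bar u_{i+1}^n)-\Phi_\lambda(\bar u_i^n,\bar u_i^n).
\]
By \cref{cond:flux-lipschitz} this is bounded by $C_\Phi|\bar u_{i+1}^n-\bar u_i^n|$, and the case $j=i+1$ is symmetric. This step also avoids any separate differentiability assumption on $\hat f$. Finally, take $M_0=\max(C_\Phi C_k,C_\Phi)$.

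The main point requiring care is that the Lipschitz constant must be uniform. This needs every argument of $\Phi_\lambda$, the reference polynomials and the constants alike, to lie in one fixed bounded subset of $\mathscr P$, which we ensure through the sup-norm bound on the averages noted above. With that in place, $M_0$ depends only on the bounded set, $k$, the quadrature rule and $\lambda_*$.
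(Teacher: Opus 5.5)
Your proposal is correct and follows the paper's proof essentially step for step. You rewrite $\hat g_{i+1/2}$ as $\Phi_\lambda$ of constant polynomials via \cref{cond:constant-reduction}, apply \cref{cond:flux-lipschitz}, bound $\|\hat u_i-\bar u_i^n\|_{L^\infty(\widehat I)}\le C_k\delta_i$ by norm equivalence on $\mathscr P$, use consistency for the second bound, and arrive at the same $M_0=C_\Phi\max\{C_k,1\}$. The only cosmetic differences are that the paper first uses Gauss exactness on $\mathbb P^{2k}$ to identify $\delta_i$ with a normalized $L^2$ norm, whereas you show directly that the discrete quadratic form is a norm on $\mathscr P$, and that you are more explicit than the paper about keeping the constant polynomials inside the fixed bounded set.
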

\begin{proof}
 Exactness of the Gauss rule on $\mathbb P^{2k}$ and norm equivalence on $\mathscr P$ give
 \[
  \delta_i^2=\frac1{|\widehat I|}\int_{\widehat I}(\hat u_i-\bar u_i^n)^2\,\mathrm{d}\xi,
  \qquad \|\hat u_i-\bar u_i^n\|_{L^\infty(\widehat I)}\le C_k\delta_i.
 \]
 By \crefrange{cond:flux-lipschitz}{cond:constant-reduction},
 \[
 \begin{aligned}
  |\hat f_{i+1/2}-\hat g_{i+1/2}|
  &\le C_\Phi\bigl(\|\hat u_i-\bar u_i^n\|_{L^\infty(\widehat I)}
       +\|\hat u_{i+1}-\bar u_{i+1}^n\|_{L^\infty(\widehat I)}\bigr)\\
  &\le C_\Phi C_k(\delta_i+\delta_{i+1}).
 \end{aligned}
 \]
 Consistency $\hat f(a,a)=f(a)$ and the same properties give
 $|\hat g_{i+1/2}-f(\bar u_j^n)|\le C_\Phi|\bar u_{i+1}^n-\bar u_i^n|$ for $j=i,i+1$.
 Taking $M_0=C_\Phi\max\{C_k,1\}$ proves both bounds.
\end{proof}

	\subsection{A reformulation of a first-order scheme}\label{sec:firstorder-feasibility}
	
	To motivate our high-order construction, we first reconsider the first-order cell-average update
	\begin{equation}\label{eq:1st}
		\bar u_i^{n+1}=\bar u_i^n
		-\lambda(\hat g_{i+1/2}-\hat g_{i-1/2}),\qquad
		\hat g_{i+1/2}=\hat f(\bar u_i^n,\bar u_{i+1}^n).
	\end{equation}
	For the LF flux \cref{eq:LF}, previous works~\cite{kivva2022entropy,liu2026limiter} employ the proper LF entropy flux
	\begin{equation}\label{eq:properLF}
		\hat{\mathcal F}_{\mathrm{LF}}(a,b)
		=\frac{\mathcal F(a)+\mathcal F(b)}2
		-\frac\alpha2\bigl(\mathcal U(b)-\mathcal U(a)\bigr)
	\end{equation}
	to establish the weak cell entropy inequality $\mathcal U(\bar u_i^{n+1})\le\mathcal U(\bar u_i^n)-\lambda(\hat{\mathcal F}_{i+1/2}-\hat{\mathcal F}_{i-1/2})$ under a CFL condition. However, \cref{eq:properLF} is only one particular choice. With the conservative update \cref{eq:1st} fixed, a natural question arises: \emph{can we construct an explicit local set of shared numerical entropy fluxes sufficient to ensure this weak cell entropy inequality?}
	
    At $x_{i+1/2}$, set $a=\bar u_i^n$, $b=\bar u_{i+1}^n$,
    and suppress the interface subscript. Define the virtual states
    \[
     Z^-=a-2\lambda\bigl(\hat g-f(a)\bigr),\qquad
     Z^+=b+2\lambda\bigl(\hat g-f(b)\bigr).
    \]
    Suppose a shared numerical entropy flux $\hat{\mathcal F}$ satisfies
    the half-cell entropy inequalities
    \begin{equation}\label{eq:half}
     \mathcal U(Z^-)
     \le \mathcal U(a)
           -2\lambda\bigl(\hat{\mathcal F}-\mathcal F(a)\bigr),\qquad 
     \mathcal U(Z^+)
     \le \mathcal U(b)
           +2\lambda\bigl(\hat{\mathcal F}-\mathcal F(b)\bigr).
    \end{equation}
    Restoring interface indices, the update \cref{eq:1st} satisfies
    \[
     \bar u_i^{n+1}
     =\frac12\bigl(Z^+_{i-1/2}+Z^-_{i+1/2}\bigr).
    \]
    Convexity and the two half-cell inequalities therefore give
    \[
     \mathcal U(\bar u_i^{n+1})
     \le \frac12\bigl(
           \mathcal U(Z^+_{i-1/2})
           +\mathcal U(Z^-_{i+1/2})\bigr)\le \mathcal U(\bar u_i^n)
           -\lambda\bigl(
           \hat{\mathcal F}_{i+1/2}
           -\hat{\mathcal F}_{i-1/2}\bigr).
    \]
    Thus, to obtain the cell entropy inequality, it suffices to enforce
    the half-cell entropy inequalities \cref{eq:half} at every interface. These are
    equivalent to
    \[
     \hat{\mathcal F}^{\mathrm{lo}}
     \le \hat{\mathcal F}
     \le \hat{\mathcal F}^{\mathrm{hi}},
    \]
    where
    \begin{equation}\label{eq:firstorder-interval}
     \hat{\mathcal F}^{\mathrm{lo}}
     =\mathcal F(b)
       +\frac{\mathcal U(Z^+)-\mathcal U(b)}{2\lambda},\qquad
     \hat{\mathcal F}^{\mathrm{hi}}
     =\mathcal F(a)
       +\frac{\mathcal U(a)-\mathcal U(Z^-)}{2\lambda}.
    \end{equation}
    We next prove that the interval $[\hat\FF^{\mathrm{lo}},\hat\FF^{\mathrm{hi}}]$ is nonempty under a suitable
    CFL condition. Suppress the interface subscript $i+1/2$ again. Taylor expansion and $\mathcal U''\le L$ give
    \[
    \begin{aligned}
     \mathcal U(Z^-)-\mathcal U(a)
     &\le-2\lambda v(a)(\hat g-f(a))+2\lambda^2L(\hat g-f(a))^2,\\
     \mathcal U(Z^+)-\mathcal U(b)
     &\le 2\lambda v(b)(\hat g-f(b))+2\lambda^2L(\hat g-f(b))^2.
    \end{aligned}
    \]
    Using $\psi=vf-\mathcal F$, property~\cref{cond:reference}, and
    $|\hat g-f(a)|,|\hat g-f(b)|\le C_\Phi|b-a|$ from \crefrange{cond:flux-lipschitz}{cond:constant-reduction}, we obtain
    \begin{equation}\label{eq:firstorder-nonempty}
    \begin{aligned}
     \hat{\mathcal F}^{\mathrm{lo}}_{i+1/2}-\hat{\mathcal F}^{\mathrm{hi}}_{i+1/2}
     &=\mathcal F(b)-\mathcal F(a)
       +\frac{\mathcal U(Z^+)-\mathcal U(b)+\mathcal U(Z^-)-\mathcal U(a)}{2\lambda}\\
     &\le [v]\hat g-[\psi]
           +\lambda L\bigl((\hat g-f(a))^2+(\hat g-f(b))^2\bigr)\\
     &\le-\bigl(\eta-2\lambda LC_\Phi^2\bigr)(b-a)^2\le0
    \end{aligned}
    \end{equation}
    whenever $2\lambda LC_\Phi^2\le\eta$. For the LF flux \cref{eq:LF}, the proper LF entropy flux \cref{eq:properLF} belongs to $[\hat\FF_{i+1/2}^\mathrm{lo},\hat\FF_{i+1/2}^{\mathrm{hi}}]$ under the standard CFL condition $2\lambda\alpha\le1$~\cite{carlier2023invariant}.
	
	This interval perspective naturally extends to high-order compact schemes. When every old DG polynomial $u_h^n|_{I_i}$ is replaced by its cell average $\bar u_i^n$, a compact single-step update reduces to \cref{eq:1st}. As the cellwise polynomial fluctuations $u_h^n|_{I_i}-\bar u_i^n$ are continuously restored, the full-step conservative flux varies continuously from $\hat g_{i+1/2}$ to $\hat f_{i+1/2}$, continuously perturbing the feasible entropy-flux intervals as well. Simultaneously, these intra-cell fluctuations introduce a nonnegative entropy gap given by the Jensen residual
	\[
	J_i^n=\widetilde{\mathcal U}_i^n-\mathcal U(\bar u_i^n)\ge0,
	\]
	which provides the additional entropy budget needed to absorb the flux perturbation $e=\hat f_{i+1/2}-\hat g_{i+1/2}$ and keep the perturbed intervals nonempty. This interpretation is a key contribution of the present work.
	
	\subsection{High-order feasible intervals}\label{sec:entropyconstruction}
	As in the first-order case, to enforce the weak cell entropy inequality
	\begin{equation}\label{eq:weakmeanentropy}
		\UU(\bar u_i^{n+1})\le\widetilde{\UU}_i^n
		-\lambda\bigl(\hat{\FF}_{i+1/2}-\hat{\FF}_{i-1/2}\bigr),
	\end{equation}
	we define the high-order virtual states
	\begin{align}\label{eq:z}
		Z_{i+1/2}^{-}&=\bar u_i^n-2\lambda\bigl(\hat f_{i+1/2}-f(\bar u_i^n)\bigr),\\
		Z_{i+1/2}^{+}&=\bar u_{i+1}^n+2\lambda\bigl(\hat f_{i+1/2}-f(\bar u_{i+1}^n)\bigr),\label{eq:z-plus}
	\end{align}
	and the corresponding feasible entropy-flux interval $[\hat{\FF}^{\mathrm{lo}}_{i+1/2},\hat{\FF}^{\mathrm{hi}}_{i+1/2}]$ with endpoints
	\begin{align}\label{eq:interval}
		\hat{\FF}^{\mathrm{lo}}_{i+1/2}&=\FF(\bar u_{i+1}^n)
		+\dfrac{\UU(Z_{i+1/2}^{+})-\widetilde{\UU}_{i+1}^n}{2\lambda},\\
		\hat{\FF}^{\mathrm{hi}}_{i+1/2}&=\FF(\bar u_i^n)
		+\dfrac{\widetilde{\UU}_i^n-\UU(Z_{i+1/2}^{-})}{2\lambda},\label{eq:interval-hi}
	\end{align}
	which reduce to \cref{eq:firstorder-interval} when $u_h^n$ is piecewise constant.
	
	\begin{theorem}[Nonempty feasible intervals]\label{thm:cfl}
		Under the assumptions of \cref{prop:flux-perturbation} and properties \crefrange{cond:reference}{cond:constant-reduction}, there exists $0<\lambda_0\le\lambda_*$, depending only on $\mu,L,\eta,M_0,\lambda_*$, such that $\hat{\FF}^{\mathrm{lo}}_{i+1/2}\le\hat{\FF}^{\mathrm{hi}}_{i+1/2}$ at every interface whenever $0<\lambda\le\lambda_0$.
	\end{theorem}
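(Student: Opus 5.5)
We mimic the first-order computation \cref{eq:firstorder-nonempty}, treating the flux perturbation $e=\hat f_{i+1/2}-\hat g_{i+1/2}$ as a correction absorbed by the Jensen residuals. Suppress the index $i+1/2$ and write $a=\bar u_i^n$, $b=\bar u_{i+1}^n$, $J_a=J_i^n$, $J_b=J_{i+1}^n$. The endpoint difference is
\[
\hat\FF^{\mathrm{lo}}-\hat\FF^{\mathrm{hi}}
=\FF(b)-\FF(a)+\frac{\UU(Z^+)-\UU(b)+\UU(Z^-)-\UU(a)}{2\lambda}-\frac{J_a+J_b}{2\lambda}.
\]
Since $\widetilde\UU_j^n=\UU(\bar u_j^n)+J_j^n$, the residuals appear with a favorable $1/(2\lambda)$ weight. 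The lower curvature bound $\UU''\ge\mu$ together with Gauss exactness gives $J_j^n\ge\frac\mu2\delta_j^2$. This is the budget that pays for $e$, which by \cref{prop:flux-perturbation} is bounded by $M_0(\delta_i+\delta_{i+1})$.

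First, Taylor-expand $\UU(Z^\pm)$ about $b$ and $a$ using $\UU''\le L$:
\[
\UU(Z^-)-\UU(a)\le-2\lambda v(a)(\hat f-f(a))+2\lambda^2L(\hat f-f(a))^2,
\]
and similarly for $Z^+$. Substituting $\hat f=\hat g+e$ and using $\psi=vf-\FF$ gives
\[
\hat\FF^{\mathrm{lo}}-\hat\FF^{\mathrm{hi}}
\le [v]\hat g-[\psi]+[v]e+\lambda L\bigl((\hat g-f(a)+e)^2+(\hat g-f(b)+e)^2\bigr)-\frac{\mu}{4\lambda}(\delta_i^2+\delta_{i+1}^2).
\]
Property \cref{cond:reference} bounds the first two terms by $-\eta(b-a)^2$. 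The quadratic terms are bounded by $2\lambda L\bigl(2M_0^2(b-a)^2+2e^2\bigr)\le 4\lambda LM_0^2(b-a)^2+8\lambda LM_0^2(\delta_i^2+\delta_{i+1}^2)$.

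The main obstacle is the cross term $[v]e$. It is first order in both the jump and the fluctuation, and it carries no $\lambda$ factor. Since $|[v]|\le L|b-a|$, Young's inequality gives
\[
|[v]e|\le LM_0|b-a|(\delta_i+\delta_{i+1})\le\frac\eta2(b-a)^2+\frac{L^2M_0^2}{\eta}(\delta_i^2+\delta_{i+1}^2).
\]
The jump part is absorbed by half of $\eta(b-a)^2$. The fluctuation part has an $O(1)$ coefficient, which is dominated by $\mu/(4\lambda)$ once $\lambda$ is small. This is exactly why the Jensen residual must enter with the $1/\lambda$ weight.

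Collecting the terms yields
\[
\hat\FF^{\mathrm{lo}}-\hat\FF^{\mathrm{hi}}\le-\Bigl(\frac\eta2-4\lambda LM_0^2\Bigr)(b-a)^2-\Bigl(\frac{\mu}{4\lambda}-\frac{L^2M_0^2}{\eta}-8\lambda LM_0^2\Bigr)(\delta_i^2+\delta_{i+1}^2).
\]
Both brackets are nonnegative for $\lambda\le\lambda_0$ with $\lambda_0$ depending only on $\mu,L,\eta,M_0,\lambda_*$; for example, take $\lambda_0=\min\{\lambda_*,\eta/(8LM_0^2),\,\cdot\,\}$ with the last entry solving the second bracket. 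Two points need checking along the way. The constant $M_0$ must be uniform over $\lambda\le\lambda_*$, which \cref{cond:flux-lipschitz} provides. The bounds $\mu\le\UU''\le L$ must hold on a convex set containing $Z^\pm$, the averages, and the Gauss-point values; this is arranged by the bounded-set assumption in \cref{prop:flux-perturbation}, possibly after shrinking $\lambda_0$ so that $Z^\pm$ stay within the state range.
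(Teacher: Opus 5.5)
Your proof is correct and follows the paper's argument essentially step by step. It uses the same Taylor bound on $\UU(Z^\pm)$, property \cref{cond:reference} for the reference part, Young's inequality on the cross term $[v]e$ to split off $\frac\eta2(b-a)^2$, and absorption of the remaining fluctuation terms by the $1/(2\lambda)$-weighted Jensen residuals. The only difference is cosmetic: you keep the bounds in terms of $\delta_i^2+\delta_{i+1}^2$, whereas the paper converts $e^2$ into $\frac{4M_0^2}{\mu}(J_i^n+J_{i+1}^n)$ before comparing coefficients.
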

\begin{proof}
 At $x_{i+1/2}$, set $d=\bar u_{i+1}^n-\bar u_i^n$, $e=\hat f_{i+1/2}-\hat g_{i+1/2}$ and $W=\hat{\FF}^{\mathrm{hi}}_{i+1/2}-\hat{\FF}^{\mathrm{lo}}_{i+1/2}$.
 Taylor expansion as in \cref{sec:firstorder-feasibility}, \cref{cond:reference}, \cref{eq:perturb}, uniform convexity and $|[v]|\le L|d|$ give
 \[
 \begin{aligned}
 W&\ge\frac{J_i^n+J_{i+1}^n}{2\lambda}+[\psi]-[v]\hat f_{i+1/2}\\
  &\quad-\lambda L\bigl((\hat f_{i+1/2}-f(\bar u_i^n))^2
                       +(\hat f_{i+1/2}-f(\bar u_{i+1}^n))^2\bigr)\\
  &\ge\frac{J_i^n+J_{i+1}^n}{2\lambda}
       +(\eta-4\lambda LM_0^2)d^2-L|d||e|-4\lambda Le^2,\\
 J_i^n&\ge\frac\mu2\delta_i^2,\qquad
 e^2\le M_0^2(\delta_i+\delta_{i+1})^2
     \le\frac{4M_0^2}{\mu}(J_i^n+J_{i+1}^n).
 \end{aligned}
 \]
 If $4\lambda LM_0^2\le\eta/2$, Young's inequality gives
 \[
 \begin{aligned}
 L|d||e|&\le\frac\eta2d^2+\frac{L^2}{2\eta}e^2,\\
 W&\ge\frac{J_i^n+J_{i+1}^n}{2\lambda}
       -\left(\frac{L^2}{2\eta}+4\lambda L\right)e^2\\
  &\ge\left[\frac1{2\lambda}
       -\frac{4M_0^2}{\mu}\left(\frac{L^2}{2\eta}+4\lambda L\right)\right]
       (J_i^n+J_{i+1}^n)\ge0.
 \end{aligned}
 \]
 The final inequality holds for all sufficiently small $\lambda>0$, proving that every interval in \crefrange{eq:interval}{eq:interval-hi} is nonempty.
\end{proof}
	
	\begin{lemma}[Weak cell entropy inequality]\label{lem:budget}
		For any shared numerical entropy flux $\hat{\FF}_{i+1/2}\in[\hat{\FF}^{\mathrm{lo}}_{i+1/2},\hat{\FF}^{\mathrm{hi}}_{i+1/2}]$, the cell-average entropy satisfies $\UU_i^{\mathrm{1st}}\le\UU_i^{\mathrm{up}}$.
	\end{lemma}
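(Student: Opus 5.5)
The plan is to repeat the first-order argument of \cref{sec:firstorder-feasibility}, now with the high-order virtual states \cref{eq:z}--\cref{eq:z-plus} and the discrete cell entropies $\widetilde{\UU}_i^n$ in place of $\UU(\bar u_i^n)$. The proof has three steps.

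\textbf{Step 1: the update as a midpoint of virtual states.} Taking $w=1$ in \cref{eq:compactstructure} gives the conservative update \cref{eq:mean}. We then check the identity
\[
\tfrac12\bigl(Z^+_{i-1/2}+Z^-_{i+1/2}\bigr)
=\bar u_i^n+\lambda\bigl(\hat f_{i-1/2}-f(\bar u_i^n)\bigr)-\lambda\bigl(\hat f_{i+1/2}-f(\bar u_i^n)\bigr)
=\bar u_i^{n+1}.
\]
The terms $f(\bar u_i^n)$ cancel, so the identity follows directly from \cref{eq:z}--\cref{eq:z-plus}.

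\textbf{Step 2: the interval as half-cell inequalities.} Next we rewrite membership $\hat{\FF}_{i+1/2}\in[\hat{\FF}^{\mathrm{lo}}_{i+1/2},\hat{\FF}^{\mathrm{hi}}_{i+1/2}]$ using \cref{eq:interval}--\cref{eq:interval-hi} and $\lambda>0$.
\begin{itemize}
\item The upper bound $\hat{\FF}_{i+1/2}\le\hat{\FF}^{\mathrm{hi}}_{i+1/2}$ is equivalent to $\UU(Z^-_{i+1/2})\le\widetilde{\UU}_i^n-2\lambda\bigl(\hat{\FF}_{i+1/2}-\FF(\bar u_i^n)\bigr)$.
\item The lower bound at the left interface $x_{i-1/2}$, namely $\hat{\FF}_{i-1/2}\ge\hat{\FF}^{\mathrm{lo}}_{i-1/2}$, is equivalent to $\UU(Z^+_{i-1/2})\le\widetilde{\UU}_i^n+2\lambda\bigl(\hat{\FF}_{i-1/2}-\FF(\bar u_i^n)\bigr)$.
\end{itemize}
Both right-hand sides involve only cell $I_i$. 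This matters because the flux $\hat{\FF}_{i\pm1/2}$ is shared with the neighbouring cell.

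\textbf{Step 3: convexity.} By Step 1 and convexity of $\UU$,
\[
\UU_i^{\mathrm{1st}}=\UU(\bar u_i^{n+1})\le\tfrac12\bigl(\UU(Z^+_{i-1/2})+\UU(Z^-_{i+1/2})\bigr).
\]
Averaging the two half-cell inequalities from Step 2, the $\FF(\bar u_i^n)$ terms cancel. This gives $\widetilde{\UU}_i^n-\lambda(\hat{\FF}_{i+1/2}-\hat{\FF}_{i-1/2})=\UU_i^{\mathrm{up}}$, which is the claim.

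There is no real obstacle. The only points needing care are bookkeeping ones. One is the index shift: the lower endpoint at $x_{i-1/2}$ is defined through the state $\bar u_{i}^n$ of the cell on its right, and it is exactly that endpoint which governs cell $I_i$. The other is that the virtual states must lie in the convex admissible set $\mathcal D$ so that $\UU$ is defined there; this is implicitly assumed whenever the interval is formed. Nonemptiness from \cref{thm:cfl} is not needed for the lemma itself, only for the existence of some $\hat{\FF}_{i+1/2}$ in the interval.
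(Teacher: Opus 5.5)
Your proposal is correct and matches the paper's proof. It uses the same three ingredients: the midpoint identity $\bar u_i^{n+1}=\tfrac12\bigl(Z^+_{i-1/2}+Z^-_{i+1/2}\bigr)$, convexity of $\UU$, and reading the upper endpoint at $x_{i+1/2}$ and the lower endpoint at $x_{i-1/2}$ as half-cell entropy inequalities for $I_i$; you only write out the bookkeeping that the paper compresses into a single line.
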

	\begin{proof}
		Since $\bar u_i^{n+1}=(Z_{i-1/2}^{+}+Z_{i+1/2}^{-})/2$, convexity and \crefrange{eq:interval}{eq:interval-hi} give
		\[
		\UU_i^{\mathrm{1st}}
		\le\dfrac{\UU(Z_{i-1/2}^{+})+\UU(Z_{i+1/2}^{-})}{2}
		\le\widetilde{\UU}_i^n-\lambda
		\bigl(\hat{\FF}_{i+1/2}-\hat{\FF}_{i-1/2}\bigr)
		=\UU_i^{\mathrm{up}},
		\]
		which establishes the weak cell entropy inequality.
	\end{proof}
	
	With $\UU_i^{\mathrm{1st}}\le\UU_i^{\mathrm{up}}$ established, we define the terminal scaling factor~\cite{liu2026limiter} by
	\begin{equation}\label{eq:theta}
		\theta_i=\begin{cases}
			1,&\UU_i^{\mathrm{high}}\le\UU_i^{\mathrm{up}},\\
			\dfrac{\UU_i^{\mathrm{up}}-\UU_i^{\mathrm{1st}}}{\UU_i^{\mathrm{high}}-\UU_i^{\mathrm{1st}}},
			&\text{otherwise}.
		\end{cases}
	\end{equation}
	
	\begin{lemma}[Strong cell entropy inequality \cite{liu2026limiter}]\label{thm:es}
		Suppose \cref{lem:budget} holds. Then the scaled update \cref{eq:scalingprelim} with $\theta_i$ given by \cref{eq:theta} preserves $\bar u_i^{n+1}$ and satisfies
		\begin{equation}\label{eq:es}
			\widetilde{\UU}_i^{n+1}\le \widetilde{\UU}_i^n-
			\lambda(\hat{\FF}_{i+1/2}-\hat{\FF}_{i-1/2}).
		\end{equation}
	\end{lemma}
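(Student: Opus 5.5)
The argument works cell by cell, using only that the scaling in \cref{eq:scalingprelim} preserves the average and that the discrete entropy is convex in $\theta_i$.

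\emph{Conservation.} For any $\theta_i\in[0,1]$, the scaled polynomial $u_h^{n+1}|_{I_i}$ has cell average $\bar u_i^{n+1}$. This follows from linearity of the Gauss rule \cref{eq:oldentropynotation}, which integrates $\mathbb P^k$ exactly. Combined with $\sum_\ell w_\ell(u_h^{n+1,(\pre)}(x_{i,\ell})-\bar u_i^{n+1})=0$, it yields $\sum_\ell w_\ell u_h^{n+1}(x_{i,\ell})=\bar u_i^{n+1}$.

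\emph{Checking $\theta_i\in[0,1]$.} If $\UU_i^{\mathrm{high}}\le\UU_i^{\mathrm{up}}$, then $\theta_i=1$. Otherwise $\UU_i^{\mathrm{high}}>\UU_i^{\mathrm{up}}\ge\UU_i^{\mathrm{1st}}$ by \cref{lem:budget}. Hence the denominator in \cref{eq:theta} is positive, the numerator is nonnegative, and the numerator is strictly smaller than the denominator, so $\theta_i\in[0,1)$.

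\emph{Convexity bound.} At each Gauss node, $u_h^{n+1}(x_{i,\ell})=(1-\theta_i)\bar u_i^{n+1}+\theta_i u_h^{n+1,(\pre)}(x_{i,\ell})$ is a convex combination. Convexity of $\UU$ and positivity of the weights therefore give
\[
\widetilde{\UU}_i^{n+1}=\sum_\ell w_\ell\UU\bigl(u_h^{n+1}(x_{i,\ell})\bigr)\le(1-\theta_i)\UU_i^{\mathrm{1st}}+\theta_i\UU_i^{\mathrm{high}}.
\]

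\emph{Conclusion by cases.} If $\theta_i=1$, the right-hand side equals $\UU_i^{\mathrm{high}}\le\UU_i^{\mathrm{up}}$. If $\theta_i<1$, substituting \cref{eq:theta} gives
\[
\UU_i^{\mathrm{1st}}+\theta_i(\UU_i^{\mathrm{high}}-\UU_i^{\mathrm{1st}})=\UU_i^{\mathrm{up}}.
\]
In both cases $\widetilde{\UU}_i^{n+1}\le\UU_i^{\mathrm{up}}$, which is \cref{eq:es} by the definition of $\UU_i^{\mathrm{up}}$ in \cref{eq:threeentropiesprelim}.

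The only step that needs care is the well-definedness of $\theta_i$ in the second case: it requires the strict inequality $\UU_i^{\mathrm{high}}>\UU_i^{\mathrm{1st}}$, which is exactly where \cref{lem:budget} is used. Everything else is routine.
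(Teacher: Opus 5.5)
Your proof is correct and follows the same route as the paper's argument in \cref{sec:entropy}. Average preservation comes from linearity, the convexity bound $\widetilde{\UU}_i^{n+1}\le(1-\theta_i)\UU_i^{\mathrm{1st}}+\theta_i\UU_i^{\mathrm{high}}$ is applied to the choice of $\theta_i$ in \cref{eq:theta}, and \cref{lem:budget} is used exactly where you identify, to make the denominator positive and keep $\theta_i\in[0,1]$.
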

	
	The framework extends directly to multiple entropy pairs by taking the minimum of their corresponding scaling factors.
    
	\begin{corollary}[Multiple scalar entropies]
		Given a finite family of convex entropy pairs with feasible budgets, setting $\theta_i$ to the minimum of their individual scaling factors in \cref{eq:theta} enforces the strong cell entropy inequality \cref{eq:es} for all pairs simultaneously.
	\end{corollary}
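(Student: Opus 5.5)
The corollary follows by applying \cref{thm:es} to each entropy pair separately and then using a monotonicity property of the scaling in $\theta$. Let the family be $(\UU^{(m)},\FF^{(m)})$, $m=1,\dots,M$. For each $m$, ``feasible budgets'' means that a shared numerical entropy flux $\hat\FF^{(m)}_{i+1/2}\in[\hat\FF^{\mathrm{lo},(m)}_{i+1/2},\hat\FF^{\mathrm{hi},(m)}_{i+1/2}]$ has been chosen. By \cref{lem:budget}, this gives $\UU_i^{\mathrm{1st},(m)}\le\UU_i^{\mathrm{up},(m)}$, so the factor $\theta_i^{(m)}\in[0,1]$ from \cref{eq:theta} is well defined. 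Set $\theta_i=\min_m\theta_i^{(m)}\in[0,1]$. Since the scaled polynomial \cref{eq:scalingprelim} keeps the average $\bar u_i^{n+1}$ for every $\theta_i$, conservation is immediate.

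The key step is the following property, to be checked for each fixed $m$: the map
\[
\theta\mapsto g^{(m)}(\theta)=\sum_{\ell}w_\ell\,\UU^{(m)}\bigl(\bar u_i^{n+1}+\theta(u_h^{n+1,(\pre)}(x_{i,\ell})-\bar u_i^{n+1})\bigr)
\]
is convex on $[0,1]$. Hence $g^{(m)}(\theta)\le(1-\theta)g^{(m)}(0)+\theta g^{(m)}(1)=(1-\theta)\UU_i^{\mathrm{1st},(m)}+\theta\,\UU_i^{\mathrm{high},(m)}$. The right-hand side is affine in $\theta$ and equals $\UU_i^{\mathrm{1st},(m)}\le\UU_i^{\mathrm{up},(m)}$ at $\theta=0$.

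We then split into the two cases of \cref{eq:theta}. If $\UU_i^{\mathrm{high},(m)}\le\UU_i^{\mathrm{up},(m)}$, the affine bound is at most $\UU_i^{\mathrm{up},(m)}$ for every $\theta\in[0,1]$. Otherwise the affine bound is increasing in $\theta$ and equals $\UU_i^{\mathrm{up},(m)}$ exactly at $\theta_i^{(m)}$, so it is at most $\UU_i^{\mathrm{up},(m)}$ for all $\theta\le\theta_i^{(m)}$. In both cases the set $\{\theta\in[0,1]:\text{affine bound}\le\UU_i^{\mathrm{up},(m)}\}$ contains $[0,\theta_i^{(m)}]$, and therefore it contains $\theta_i$.

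Evaluating at $\theta_i$ then gives $\widetilde\UU_i^{n+1,(m)}=g^{(m)}(\theta_i)\le\UU_i^{\mathrm{up},(m)}$, which is \cref{eq:es} for every $m$ simultaneously.

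The only point needing care is that the set of admissible $\theta$ must be an interval containing $0$, rather than just the point $\theta_i^{(m)}$; this is exactly what the convexity bound above provides. No further obstacles are expected.
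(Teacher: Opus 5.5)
Your proof is correct and follows essentially the argument the paper relies on; the paper states the corollary without a separate proof. Your key step is the convexity bound $\widetilde{\UU}_i^{n+1}\le(1-\theta)\UU_i^{\mathrm{1st}}+\theta\,\UU_i^{\mathrm{high}}$ from the weak-to-strong framework, applied to all pairs with the same $\bar u_i^{n+1}$ and $u_h^{n+1,(\pre)}$: it gives \cref{eq:es} for each pair on the whole interval $[0,\theta_i^{(m)}]$, which is exactly what the minimum needs, rather than only at $\theta_i^{(m)}$ as \cref{thm:es} used as a black box would.
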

	
	Following the standard Lax--Wendroff argument in~\cite{liu2026limiter}, the cellwise conservative update \cref{eq:mean} and strong cell entropy inequality \cref{eq:es} pass to strong $L^1$ limits. We omit the proof for brevity.
	
	\begin{theorem}[Lax--Wendroff-type convergence]\label{thm:laxwendroff}
		On a periodic domain $\Omega$, fix $k$ and let $h,\Delta t\to0$ with $0<\Delta t/h\le\lambda_*$. Suppose \crefrange{cond:flux-lipschitz}{cond:constant-reduction} and the hypotheses of \cref{thm:es} hold at every step, and assume that the shared numerical entropy flux satisfies
		\[
		\bigl|\hat{\FF}_{i+1/2}-\FF(p)\bigr|
		\le C\left\|u_h^n-p\right\|_{L^\infty(I_i\cup I_{i+1})}
		\]
		for numerical data and constant polynomials $p\in\mathscr P$ in any fixed bounded range, with $C$ independent of $i$ and $n$. Set $u_h(x,t)=u_h^n(x)$ for $t_n\le t<t_{n+1}$, and assume $u_h$ is uniformly bounded on $\Omega\times[0,T]$ for every $T>0$. If
		\[
		u_h\to u\quad\text{in }L^1(\Omega\times(0,T)),\qquad
		u_h^0\to u_0\quad\text{in }L^1(\Omega)
		\]
		for every $T>0$, then $u$ is a weak solution of \cref{eq:scalarprelim} satisfying \cref{eq:weakentropy} for $(\UU,\FF)$. In particular, the conclusion holds simultaneously for any finite family of enforced entropy pairs.
	\end{theorem}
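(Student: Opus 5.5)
The plan is the classical Lax--Wendroff argument, applied twice. First to the conservative cell-average update \cref{eq:mean}, which identifies $u$ as a weak solution. Then to the strong cell entropy inequality \cref{eq:es} of \cref{thm:es}, which gives \cref{eq:weakentropy}.

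Fix a nonnegative smooth periodic test function $\varphi$ with support in $\Omega\times[0,T)$, and set $\varphi_i^n=\varphi(x_i,t_n)$. Multiply \cref{eq:es} by $h\varphi_i^n\ge0$, which preserves the inequality direction, and sum over $i$ and $n$. Summation by parts in time and in space then gives
\[
\sum_n\sum_i h\Delta t\,\widetilde\UU_i^{n+1}\frac{\varphi_i^{n+1}-\varphi_i^n}{\Delta t}
+\sum_i h\widetilde\UU_i^0\varphi_i^0
+\sum_n\sum_i h\Delta t\,\hat\FF_{i+1/2}\frac{\varphi_{i+1}^n-\varphi_i^n}{h}\ge0 .
\]
This uses only that the interface entropy fluxes are shared and the mesh is periodic. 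The conservation statement comes out the same way from \cref{eq:mean}, as an equality and with signed test functions.

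Three limits are needed.
\begin{enumerate}[leftmargin=*]
\item Discrete entropies. Since $u_h$ is uniformly bounded and $\UU$ is locally Lipschitz, $|\widetilde\UU_i^n-\frac1h\int_{I_i}\UU(u_h^n)\,\mathrm dx|$ is controlled by the local oscillation of $u_h^n$ on $I_i$. Concretely, compare both quantities with $\UU(c_i)$, where $c_i$ is a constant to be chosen. Together with $u_h\to u$ in $L^1$ and $\varphi\in C^1$, this shows that the first two sums converge to $\iint\UU(u)\varphi_t$ and $\int\UU(u_0)\varphi(\cdot,0)$.
\item Time shift. The index shift from $n+1$ to $n$ costs $O(\Delta t)$ times bounded quantities, so it does not affect the limit.
\item Numerical entropy fluxes. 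This is the main obstacle, since $\hat\FF_{i+1/2}$ is defined only through the projection onto the feasible interval. Here I will use the assumed Lipschitz-type consistency bound with $p=u_h^n(x)$ evaluated at a point $x\in I_i$ ($p$ is a constant polynomial in the bounded range). Averaging over $x\in I_i$ gives
\[
\Bigl|\hat\FF_{i+1/2}-\frac1h\int_{I_i}\FF(u_h^n)\,\mathrm dx\Bigr|
\le C\,\frac1h\int_{I_i}\|u_h^n-u_h^n(x)\|_{L^\infty(I_i\cup I_{i+1})}\,\mathrm dx .
\]
\end{enumerate}

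The remaining work is to show that $\sum_n\sum_i h\Delta t$ times this local oscillation tends to zero. The key sub-step is to bound the $L^\infty$ oscillation of a degree-$k$ polynomial on a cell by its $L^1$ deviation from a constant. This follows from norm equivalence on $\mathscr P$ under the affine pullback: $\|q-c\|_{L^\infty(I_i)}\le C_k h^{-1}\|q-c\|_{L^1(I_i)}$. Choosing $c=u(x,t)$ and integrating over the cell pair reduces the oscillation sum to terms of the form
\[
\|u_h-u\|_{L^1}+\iint|u(x+h\sigma,t)-u(x,t)|\,\mathrm dx\,\mathrm dt,\qquad |\sigma|\le 2 .
\]
These vanish by $L^1$ convergence and by continuity of translation in $L^1$ (the time lag is also bounded, by $\Delta t\le\lambda_*h$). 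The same estimate also covers item 1, with $\UU$ in place of $\FF$.

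Passing to the limit yields \cref{eq:weakentropy}. The conservation part is identical, using \cref{cond:flux-lipschitz}--\cref{cond:constant-reduction} for $\hat f$ in place of the assumed bound on $\hat\FF$. Since \cref{eq:es} holds for each enforced pair, the conclusion holds simultaneously for any finite family of entropy pairs.
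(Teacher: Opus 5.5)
Your proposal is correct and follows the standard Lax--Wendroff argument that the paper invokes by citation and omits: summation by parts of the conservative update and of the strong cell entropy inequality against $\varphi_i^n$, then control of $\hat{\FF}_{i+1/2}$ and $\widetilde{\UU}_i^n$ by the local oscillation of $u_h^n$, using norm equivalence on $\mathscr P$ and $L^1$ continuity of translation. The one point you leave implicit is that the initial term requires $u_h^0\to u_0$ in $L^1(\Omega)$, with the same oscillation estimate at $n=0$; this goes through unchanged.
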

	
	\subsection{High-order entropy-flux design and accuracy preservation}\label{sec:accuracy}
	To maintain high-order accuracy in smooth regions while guaranteeing entropy stability, we define $\hat{\FF}_{i+1/2}$ by projecting a high-order target entropy flux $\hat{\FF}^{\tar}_{i+1/2}$ onto the feasible interval:
	\begin{equation}\label{eq:projection}
		\hat{\FF}_{i+1/2}=\min\{\hat{\FF}^{\mathrm{hi}}_{i+1/2},
		\max\{\hat{\FF}^{\mathrm{lo}}_{i+1/2},\hat{\FF}^{\tar}_{i+1/2}\}\}.
	\end{equation}
	On any cell where the limiter is active with $\theta_i<1$, \cref{eq:scalingprelim} and \cref{eq:theta} give
	\begin{equation}\label{eq:perturbsize}
		\|u_h^{n+1}-u_h^{n+1,(\pre)}\|_\infty
		=\dfrac{\UU_i^{\mathrm{high}}-\UU_i^{\mathrm{up}}}{\UU_i^{\mathrm{high}}-\UU_i^{\mathrm{1st}}}
		\|u_h^{n+1,(\pre)}-\bar u_i^{n+1}\|_\infty,
	\end{equation}
	where cellwise $L^\infty$ norms are taken over $I_i$. Set
	\[
	\begin{aligned}
		D_i&=(\UU_i^{\mathrm{high}}-\UU_i^{\mathrm{up}})_+,\qquad
		J_i^*=\UU_i^{\mathrm{high}}-\UU_i^{\mathrm{1st}},\\
		A_i^m&=h|u_x(x_i,t_m)|+h^2,\qquad m=n,n+1.
	\end{aligned}
	\]
	The local variation scale $A_i^n$ is $O(h)$ away from critical points and $O(h^2)$ within $O(h)$ of a nondegenerate extremum. Fix $k\ge2$ and assume \crefrange{cond:flux-lipschitz}{cond:constant-reduction}.
	
	\begin{assumption}[Smooth solution and approximation accuracy]\label{ass:smooth-patch}
		On a fixed space--time neighborhood containing the stencils and one-step characteristics of the considered cells, we assume the following for $0<\lambda\le\bar\lambda\le\lambda_*$:
		\begin{enumerate}
			\item\label{ass:smooth-regularity}
			The exact solution $u$ is smooth, all required derivatives of $u,f,\UU$ are uniformly bounded, and the nondegeneracy condition $|u_x|+|u_{xx}|\ge c_*>0$ holds.
			\item\label{ass:old-accuracy}
			The old solution satisfies $\|u_h^n-u(\cdot,t_n)\|_\infty\le Ch^{k+1}$.
			\item\label{ass:candidate-accuracy}
			The preliminary solution satisfies $\|u_h^{n+1,(\pre)}-u(\cdot,t_{n+1})\|_\infty\le Ch^{k+1}$.
		\end{enumerate}
	\end{assumption}
	
	\begin{assumption}[Nonlinear consistency of the entropy-flux target]\label{ass:entropy-target}
		For each cell $I_i$ in the smooth patch and $e=i\pm1/2$, the target entropy flux satisfies
		\begin{equation}\label{eq:weightedconsistency}
			\left|\hat{\FF}^{\tar}_e-\UU'(\bar u_i^n)\hat f_e
			-\dfrac1{\Delta t}\int_{t_n}^{t_{n+1}}
			\bigl[\FF(u(x_e,t))-\UU'(\bar u_i^n)f(u(x_e,t))\bigr]\,\mathrm{d}t
			\right|\le C A_i^n h^{k+1}.
		\end{equation}
	\end{assumption}
	
	Taylor expansion, uniform convexity \cref{eq:entropycurvature}, and the nondegeneracy condition $|u_x|+|u_{xx}|\ge c_*>0$ in \cref{ass:smooth-patch} directly yield the following scaling estimates for the local fluctuations and Jensen residuals.
	
	\begin{lemma}\label{lem:accuracy-amplitude}
		Under \cref{ass:smooth-patch}, for sufficiently small $h$, there exist $c,c_J>0$ such that:
		\begin{enumerate}
			\renewcommand{\labelenumi}{(\arabic{enumi})}
			\item $cA_i^n\le A_i^{n+1}\le CA_i^n$.
			\item $\|u_h^{n+1,(\pre)}-\bar u_i^{n+1}\|_\infty\le CA_i^n$ and $\|u_h^n-\bar u_i^n\|_\infty\le CA_i^n$.
			\item The Jensen residuals satisfy
			\begin{equation}\label{eq:amplitudejensen}
				c_J (A_i^n)^2\le J_i^n,J_i^*\le C (A_i^n)^2.
			\end{equation}
		\end{enumerate}
	\end{lemma}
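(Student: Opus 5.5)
The argument is local Taylor analysis on a single cell. Write $u^m(x)=u(x,t_m)$ and Taylor-expand around $x_i$: $u^m(x)=u^m(x_i)+u^m_x(x_i)(x-x_i)+\tfrac12u^m_{xx}(x_i)(x-x_i)^2+O(h^3)$ on $I_i$. The approximation errors in \cref{ass:smooth-patch} are $O(h^{k+1})\le O(h^3)$ for $k\ge2$, so they are dominated by $h^2\le A_i^n$ in every estimate below. I would prove the three items in the order (1), (2), (3).

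\emph{Item (1).} Since $u_x$ is Lipschitz in time, $|u_x(x_i,t_{n+1})-u_x(x_i,t_n)|\le C\Delta t\le C\lambda_* h$. Hence
\[
h|u_x(x_i,t_{n+1})|\le h|u_x(x_i,t_n)|+C h^2 ,
\]
which gives $A_i^{n+1}\le CA_i^n$, and the same inequality with $n$ and $n+1$ exchanged gives the lower bound $cA_i^n\le A_i^{n+1}$. The $h^2$ term in $A_i^m$ is exactly what absorbs the $O(h^2)$ perturbation.

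\emph{Item (2).} Write $u_h^n-\bar u_i^n=(u^n-\overline{u^n}_i)+O(h^{k+1})$. The Taylor expansion gives $\|u^n-\overline{u^n}_i\|_{L^\infty(I_i)}\le \tfrac h2|u^n_x(x_i)|+Ch^2\le CA_i^n$. The preliminary solution is handled the same way using \cref{ass:smooth-patch}\cref{ass:candidate-accuracy}, giving a bound $CA_i^{n+1}$, which is at most $CA_i^n$ by item (1).

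\emph{Item (3), upper bounds.} By Taylor's theorem with remainder for the Jensen residual and $\UU''\le L$,
\[
J_i^n=\sum_\ell w_\ell\bigl[\UU(u_h^n(x_{i,\ell}))-\UU(\bar u_i^n)-\UU'(\bar u_i^n)(u_h^n(x_{i,\ell})-\bar u_i^n)\bigr]\le \tfrac L2\delta_i^2\le C(A_i^n)^2 .
\]
The linear term vanishes because the Gauss rule reproduces the cell average. The same argument applied to $u_h^{n+1,(\pre)}$, whose quadrature average equals $\bar u_i^{n+1}$ by exactness, together with item (2), gives $J_i^*\le C(A_i^n)^2$.

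\emph{Item (3), lower bounds.} This is the main obstacle. Uniform convexity gives $J_i^n\ge\tfrac\mu2\delta_i^2$, so it suffices to show $\delta_i^2\ge c(A_i^n)^2$, and likewise for the preliminary solution at time $t_{n+1}$ (then use item (1)). Split into two regimes by a threshold $|u_x(x_i,t_n)|\ge Kh$, with $K$ a large constant to be fixed.

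\begin{itemize}
\item \emph{Large-gradient regime.} The linear part of the exact profile has discrete variance $\sum_\ell w_\ell(u_x\,(x_{i,\ell}-x_i))^2=c_kh^2u_x^2$ with $c_k>0$, since $N_{\mathrm G}\ge 2$ and the rule is exact on quadratics. The remainder is $O(h^2)$. So $\delta_i\ge \sqrt{c_k}\,h|u_x|-Ch^2\ge c(h|u_x|+h^2)$ once $K$ is large.
\item \emph{Small-gradient regime.} Here $|u_x|\le Kh$, so for small $h$ nondegeneracy forces $|u_{xx}|\ge c_*/2$. The quadratic mode $\tfrac12u_{xx}\bigl((x-x_i)^2-\overline{(x-x_i)^2}\bigr)$ has discrete standard deviation of order $c\,h^2|u_{xx}|$. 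It is $L^2$-orthogonal to the linear mode under the exact Gauss rule, so the variances add. The $O(h^3)$ and $O(h^{k+1})$ remainders are then negligible. This yields $\delta_i\ge ch^2\ge c'A_i^n$, using $A_i^n\le (K+1)h^2$.
\end{itemize}

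The delicate points are choosing $K$ uniformly and checking that the cubic Taylor remainder does not cancel the quadratic mode. Both follow from the orthogonality of the linear and quadratic modes under the Gauss rule and from taking $h$ sufficiently small.
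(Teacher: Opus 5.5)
Your proposal is correct and follows the route the paper indicates: Taylor expansion about $(x_i,t_m)$, the two-sided convexity bounds $\frac\mu2\delta_i^2\le J_i^n\le\frac L2\delta_i^2$, and the nondegeneracy condition for cells where $|u_x|=O(h)$. Your two-regime split and the Gauss-rule orthogonality of the linear and quadratic modes supply the details that the paper's one-line justification leaves implicit.
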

    
	In view of part~(1), we write $A_i=A_i^n$ below. Next, we show that in smooth regions under a suitable CFL condition, the high-order target flux $\hat{\FF}^{\tar}_{i+1/2}$ already lies inside the feasible interval, so the projection \cref{eq:projection} is inactive.
	
	\begin{lemma}\label{lem:accuracy-target-feasible}
		Under \cref{ass:smooth-patch,ass:entropy-target}, there exists $\bar\lambda>0$ such that for sufficiently small $h$ and $0<\lambda\le\bar\lambda$,
		\[
		\hat{\FF}^{\mathrm{lo}}_{i+1/2}
		\le\hat{\FF}^{\tar}_{i+1/2}\le\hat{\FF}^{\mathrm{hi}}_{i+1/2}.
		\]
	\end{lemma}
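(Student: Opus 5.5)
We prove the upper inequality $\hat\FF^{\tar}_{i+1/2}\le\hat\FF^{\mathrm{hi}}_{i+1/2}$; the lower one follows symmetrically, using cell $I_{i+1}$, the state $Z^+_{i+1/2}$, and \cref{ass:entropy-target} with $e=i+1/2$ viewed from the right cell. Write $a=\bar u_i^n$, $v_a=\UU'(a)$, $\hat f=\hat f_{i+1/2}$, and $\Delta:=\hat f-f(a)$. By \cref{eq:interval-hi}, the claim is equivalent to
\[
2\lambda\bigl(\hat\FF^{\tar}-\FF(a)\bigr)+\UU(Z^-)-\UU(a)\le J_i^n .
\]
Since $J_i^n\ge c_J A_i^2$ by \cref{lem:accuracy-amplitude}, it suffices to bound the left-hand side by something strictly smaller. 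Concretely, we aim for $\mathrm{LHS}\le \lambda C A_i^2+C A_i h^{k+1}$. Choosing $\bar\lambda$ small then gives $\lambda C A_i^2\le\frac{c_J}{2}A_i^2$. Because $A_i\ge h^2$ and $k\ge2$, we have $CA_ih^{k+1}\le CA_i^2h^{k-1}$, which is $o(A_i^2)$ as $h\to0$.

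\textbf{Step 1: expand $\UU(Z^-)$.} Since $Z^--a=-2\lambda\Delta$, Taylor's theorem with $\UU''\le L$ gives
\[
\UU(Z^-)-\UU(a)\le -2\lambda v_a\Delta+2\lambda^2L\Delta^2 .
\]
Substituting, the left-hand side is bounded by
\[
2\lambda\bigl[\hat\FF^{\tar}-v_a\hat f-\FF(a)+v_af(a)\bigr]+2\lambda^2L\Delta^2 .
\]

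\textbf{Step 2: insert the target consistency.} Using \cref{ass:entropy-target}, replace $\hat\FF^{\tar}-v_a\hat f$ by the time average of $G(u):=\FF(u)-v_af(u)$ along $x_{i+1/2}$, at cost $CA_ih^{k+1}$. Since $G'(u)=(\UU'(u)-v_a)f'(u)$, we have $G'(a)=0$ and hence $|G(u)-G(a)|\le C|u-a|^2$. The exact solution at $x_{i+1/2}$, for $t\in[t_n,t_{n+1}]$, differs from $a$ by at most $C(h|u_x|+h^2+\Delta t)$. This gives the bracket bound $C(A_i+\lambda h)^2$, i.e.\ the bracket is $O(A_i^2)$ only when $\lambda h\lesssim A_i$.

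\textbf{Step 3 (main obstacle): the near-extremum regime.} Near an extremum, $A_i\sim h^2$ while the time displacement is of order $\Delta t\,|u_t|=\Delta t\,|f'(u)u_x|$. Since $u_x=O(h)$ there, this displacement is $O(\lambda h^2)$, hence $O(A_i)$. So we use $|u(x_{i+1/2},t)-a|\le C A_i$ via $u_t=-f'(u)u_x$ and $|u_x(x,t)|\le |u_x(x_i,t_n)|+Ch$ on the stencil. This yields a bracket of size $CA_i^2$, so the first term is $2\lambda CA_i^2$.

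\textbf{Step 4: bound $\Delta$.} For $\Delta^2$, \cref{def:compact}(3) and the same displacement estimate give $|\Delta|\le CA_i+Ch^{k+1}\le CA_i$. Hence $2\lambda^2L\Delta^2\le C\lambda^2A_i^2$.

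Collecting Steps 1–4 gives $\mathrm{LHS}\le \lambda C A_i^2+CA_ih^{k+1}$, which completes the argument. The delicate point is ensuring that all constants in Step 3 are uniform under the nondegeneracy condition and part (1) of \cref{lem:accuracy-amplitude}.
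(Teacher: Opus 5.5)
Your proposal is correct and follows essentially the paper's argument. You Taylor-expand $\UU(Z^-_{i+1/2})$ about $\bar u_i^n$ and use \cref{ass:entropy-target} to reduce the target mismatch to the time average of $G(u)-G(\bar u_i^n)$, where $G(u)=\FF(u)-\UU'(\bar u_i^n)f(u)$, which is quadratic in $|u-\bar u_i^n|\le CA_i$. You then absorb the resulting $O(\lambda A_i^2)$ terms into $J_i^n\ge c_JA_i^2$ for small $\lambda$. The differences are minor: you bound $|\hat f_{i+1/2}-f(\bar u_i^n)|$ through the consistency requirement of \cref{def:compact} rather than \crefrange{cond:flux-lipschitz}{cond:constant-reduction}, and you justify the displacement bound explicitly via $u_t=-f'(u)u_x$, which the paper uses without comment.
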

\begin{proof}
\cref{lem:accuracy-amplitude} and \crefrange{cond:flux-lipschitz}{cond:constant-reduction} give $|\hat f_{i+1/2}-f(\bar u_i^n)|\le CA_i$ and $|Z_{i+1/2}^--\bar u_i^n|\le C\lambda A_i$.
With $u=u(x_{i+1/2},t)$, entropy compatibility and \cref{eq:weightedconsistency} yield
\[
\begin{aligned}
 &\left|\hat{\FF}^{\tar}_{i+1/2}-\FF(\bar u_i^n)
       -\UU'(\bar u_i^n)\bigl(\hat f_{i+1/2}-f(\bar u_i^n)\bigr)\right|\\
 &\le CA_i h^{k+1}
  +\frac1{\Delta t}\int_{t_n}^{t_{n+1}}
    \left|\int_{\bar u_i^n}^{u}
       \bigl[\UU'(z)-\UU'(\bar u_i^n)\bigr]f'(z)\,\mathrm{d}z\right|\,\mathrm{d}t\\
 &\le CA_i h^{k+1}
  +\frac C{\Delta t}\int_{t_n}^{t_{n+1}}|u-\bar u_i^n|^2\,\mathrm{d}t
 \le CA_i^2.
\end{aligned}
\]
Taylor expansion, \cref{eq:interval-hi} and \cref{eq:amplitudejensen} now give
\[
\begin{aligned}
 0&\le\UU(Z_{i+1/2}^-)-\UU(\bar u_i^n)
       -\UU'(\bar u_i^n)(Z_{i+1/2}^--\bar u_i^n)\\
  &\le\frac L2|Z_{i+1/2}^--\bar u_i^n|^2\le C\lambda^2 A_i^2,\\
 \hat{\FF}^{\mathrm{hi}}_{i+1/2}-\hat{\FF}^{\tar}_{i+1/2}
  &\ge\frac{J_i^n}{2\lambda}-CA_i^2-C\lambda A_i^2
   \ge A_i^2\left(\frac{c_J}{2\lambda}-C(1+\lambda)\right)\ge0
\end{aligned}
\]
for sufficiently small $\bar\lambda>0$.
The same expansion about $\bar u_{i+1}^n$ gives
$\hat{\FF}^{\tar}_{i+1/2}-\hat{\FF}^{\mathrm{lo}}_{i+1/2}\ge A_{i+1}^2\bigl(\frac{c_J}{2\lambda}-C(1+\lambda)\bigr)\ge0$, proving the lemma.
\end{proof}
	
	\begin{theorem}[Accuracy preservation]\label{thm:localaccuracy}
		Under \cref{ass:smooth-patch,ass:entropy-target}, let $\hat{\FF}_{i+1/2}$ be selected by \cref{eq:projection}. For sufficiently small $h$ and $0<\lambda\le\bar\lambda$,
		\begin{equation}\label{eq:uniformlocalaccuracy}
			D_i\le C A_i h^{k+1},\qquad
			\|u_h^{n+1}-u_h^{n+1,(\pre)}\|_\infty\le C h^{k+1},
		\end{equation}
		uniformly across the smooth patch, including cells near nondegenerate extrema.
	\end{theorem}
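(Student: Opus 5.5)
By \cref{lem:accuracy-target-feasible}, for sufficiently small $h$ and $0<\lambda\le\bar\lambda$ the projection \cref{eq:projection} is inactive at both faces of $I_i$. Hence $\hat\FF_{i\pm1/2}=\hat\FF^{\tar}_{i\pm1/2}$ and
\[
\UU_i^{\mathrm{up}}=\widetilde\UU_i^n-\lambda\bigl(\hat\FF^{\tar}_{i+1/2}-\hat\FF^{\tar}_{i-1/2}\bigr).
\]
The proof therefore reduces to a consistency estimate for $\UU_i^{\mathrm{high}}-\UU_i^{\mathrm{up}}$ using the target fluxes. The plan is to bound $D_i$ first, and then insert that bound into \cref{eq:perturbsize} together with \cref{lem:accuracy-amplitude}.

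\textbf{Step 1: entropy residual.} The key device is to subtract $\UU'(\bar u_i^n)$ times the conservative update, because this makes the errors scale with $A_i$ rather than $O(1)$. Set $\bar v=\UU'(\bar u_i^n)$ and $G(u)=\UU(u)-\bar v u$, with associated flux $H(u)=\FF(u)-\bar v f(u)$. Then $(G,H)$ is again an entropy pair, since $G'f'=\UU'f'-\bar v f'=H'$. Taking $w=1$ in \cref{eq:compactstructure} shows that the preliminary cell average satisfies \cref{eq:mean} exactly. Also, $\sum_\ell w_\ell u_h^{n+1,(\pre)}(x_{i,\ell})=\bar u_i^{n+1}$ by Gauss exactness. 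Using these two facts,
\[
\UU_i^{\mathrm{high}}-\UU_i^{\mathrm{up}}
=\sum_\ell w_\ell G\bigl(u_h^{n+1,(\pre)}(x_{i,\ell})\bigr)-\sum_\ell w_\ell G\bigl(u_h^n(x_{i,\ell})\bigr)
+\lambda\sum_{e=i\pm1/2}\pm\bigl(\hat\FF^{\tar}_e-\bar v\hat f_e\bigr).
\]
By \cref{eq:weightedconsistency}, each face term equals $\Delta t^{-1}\int_{t_n}^{t_{n+1}}H(u(x_e,t))\,\mathrm dt$ up to an error of $CA_ih^{k+1}$; multiplied by $\lambda$, it remains $O(A_ih^{k+1})$.

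For the cell terms, I replace $u_h^{n+1,(\pre)}$ and $u_h^n$ by $u(\cdot,t_{n+1})$ and $u(\cdot,t_n)$ using \cref{ass:smooth-patch}. The point is that $G'(u)=\UU'(u)-\bar v=O(|u-\bar u_i^n|)=O(A_i)$ on $I_i$, by \cref{lem:accuracy-amplitude}(2) and the smoothness of $u$ over one step. A mean-value argument then makes each replacement cost $O(A_i h^{k+1})$ rather than $O(h^{k+1})$; the quadratic remainder is $O(h^{2k+2})$, which is absorbed since $A_i\ge h^2$ and $k\ge2$.

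Next, the Gauss quadrature of $G(u(\cdot,t))$ must be compared with $h^{-1}\int_{I_i}G(u)\,\mathrm dx$. Writing $G(u)=G(\bar u)+O(A_i)(u-\bar u)+O((u-\bar u)^2)$ with smooth coefficients, the quadrature error is $h^{2N_{\mathrm G}}$ times derivatives of $G\circ u$. Those derivatives carry a factor $A_i$ in the leading term, or are of order $h^{2N_{\mathrm G}}$ with $2N_{\mathrm G}\ge k+2$, so they are acceptable. This is the step I expect to be the main obstacle: it requires expressing high derivatives of $G\circ u$ in terms of $G'(u)=O(A_i/h)$-scale quantities, handled through Faà di Bruno, with the $h^2$ floor in $A_i$ absorbing the pure curvature terms near extrema.

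Finally, the exact entropy identity for $(G,H)$ on $I_i\times[t_n,t_{n+1}]$ cancels the remaining volume and flux terms, yielding $|\UU_i^{\mathrm{high}}-\UU_i^{\mathrm{up}}|\le CA_ih^{k+1}$. Hence $D_i\le CA_ih^{k+1}$.

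\textbf{Step 2: limiter perturbation.} If $\theta_i=1$, the perturbation is zero. Otherwise, \cref{eq:perturbsize} gives
\[
\|u_h^{n+1}-u_h^{n+1,(\pre)}\|_\infty=\frac{D_i}{J_i^*}\,\|u_h^{n+1,(\pre)}-\bar u_i^{n+1}\|_\infty .
\]
By \cref{eq:amplitudejensen}, $J_i^*\ge c_JA_i^2$, and by \cref{lem:accuracy-amplitude}(2), $\|u_h^{n+1,(\pre)}-\bar u_i^{n+1}\|_\infty\le CA_i$. Therefore the perturbation is at most $CA_ih^{k+1}\cdot CA_i/(c_JA_i^2)=Ch^{k+1}$. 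This bound is uniform both away from critical points, where $A_i\sim h$, and near nondegenerate extrema, where $A_i\sim h^2$, because every constant is uniform in $A_i$.
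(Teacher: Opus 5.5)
Your proposal is correct and follows essentially the same route as the paper. The shifted pair $(G,H)$ with $G=\UU-\UU'(\bar u_i^n)u$ repackages the paper's step of subtracting $\UU'(\bar u_i^n)$ times the conservative balance \cref{eq:conservative-error-balance} from the exact entropy balance, and your nodewise mean-value estimate and Step~2 coincide with the paper's.

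The quadrature step you flag as the main obstacle needs no Fa\`a di Bruno analysis. The Gauss error for the smooth function $G\circ u$ is $O(h^{2N_{\mathrm G}})\le Ch^{k+3}\le CA_ih^{k+1}$, using $N_{\mathrm G}\ge k+1$ and $A_i\ge h^2$; this is how the paper disposes of it. Note that the exponent you need is $2N_{\mathrm G}\ge k+3$, not merely $k+2$ as you wrote.
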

\begin{proof}
By \cref{lem:accuracy-target-feasible}, $\hat{\FF}_e=\hat{\FF}^{\tar}_e$ at $e=i\pm1/2$, so $D_i=[\UU_i^{\mathrm{high}}-\widetilde{\UU}_i^n+\lambda(\hat{\FF}^{\tar}_{i+1/2}-\hat{\FF}^{\tar}_{i-1/2})]_+$. Write $u^m(x)=u(x,t_m)$ and $u_e(t)=u(x_e,t)$. Subtracting the exact cell entropy balance
\[
 \frac1h\int_{I_i}[\UU(u^{n+1})-\UU(u^n)]\,\mathrm dx
 +\frac1h\int_{t_n}^{t_{n+1}}[\FF(u_e)]_{e=i-1/2}^{e=i+1/2}\,\mathrm dt=0
\]
yields the decomposition
\[
\begin{aligned}
 \UU_i^{\mathrm{high}}-\UU_i^{\mathrm{up}}
 ={}&\left[\UU_i^{\mathrm{high}}-\frac1h\int_{I_i}\UU(u^{n+1})\,\mathrm dx\right]
 -\left[\widetilde{\UU}_i^n-\frac1h\int_{I_i}\UU(u^n)\,\mathrm dx\right]\\
 &+\lambda\left[\hat{\FF}^{\tar}_e-\frac1{\Delta t}
 \int_{t_n}^{t_{n+1}}\FF(u_e)\,\mathrm dt\right]_{e=i-1/2}^{e=i+1/2}.
\end{aligned}
\]
At each Gauss node $x_{i,\ell}$, setting $a_\ell=u_h^{n+1,(\pre)}(x_{i,\ell})$ and $b_\ell=u^{n+1}(x_{i,\ell})$, we have
\[
 \UU(a_\ell)-\UU(b_\ell)-\UU'(\bar u_i^n)(a_\ell-b_\ell)
 =\int_{b_\ell}^{a_\ell}[\UU'(z)-\UU'(\bar u_i^n)]\,\mathrm dz
 =O(A_i h^{k+1}).
\]
Since the $N_{\mathrm G}$-point Gauss rule is exact through degree $2k\ge k+2$, its quadrature error on $u$ and $\UU(u)$ is $O(h^{k+3})$. Summing over $\ell$ gives the new-time estimate and, similarly, the old-time estimate:
\begin{align}
 \UU_i^{\mathrm{high}}-\frac1h\int_{I_i}\UU(u^{n+1})\,\mathrm dx
 & =\UU'(\bar u_i^n)\sum_\ell w_\ell(a_\ell-b_\ell)
 +O(A_i h^{k+1}+h^{k+3})\notag\\
 & =\UU'(\bar u_i^n)\left[\bar u_i^{n+1}
 -\frac1h\int_{I_i}u^{n+1}\,\mathrm dx\right]
 +O(A_i h^{k+1}+h^{k+3}),\label{eq:newentropy-error}\\
 \widetilde{\UU}_i^n-\frac1h\int_{I_i}\UU(u^n)\,\mathrm dx
 & =\UU'(\bar u_i^n)\left[\bar u_i^n
 -\frac1h\int_{I_i}u^n\,\mathrm dx\right]
 +O(A_i h^{k+1}+h^{k+3}).\label{eq:oldentropy-error}
\end{align}
At $e=i\pm1/2$, \cref{ass:entropy-target} gives
\begin{equation}\label{eq:faceentropy-error}
 \hat{\FF}^{\tar}_e-\frac1{\Delta t}\int_{t_n}^{t_{n+1}}\FF(u_e)\,\mathrm dt
 =\UU'(\bar u_i^n)\left[\hat f_e-\frac1{\Delta t}
 \int_{t_n}^{t_{n+1}}f(u_e)\,\mathrm dt\right]+O(A_i h^{k+1}).
\end{equation}
By the conservative cell-average update \cref{eq:mean} and the exact conservation law, the leading $\UU'(\bar u_i^n)$-weighted terms satisfy
\begin{equation}\label{eq:conservative-error-balance}
\begin{aligned}
 \bar u_i^{n+1}-\bar u_i^n-\frac1h\int_{I_i}(u^{n+1}-u^n)\,\mathrm dx
 =-\lambda\left[\hat f_e-\frac1{\Delta t}
 \int_{t_n}^{t_{n+1}}f(u_e)\,\mathrm dt\right]_{e=i-1/2}^{e=i+1/2}
\end{aligned}
\end{equation}
and therefore cancel identically when \crefrange{eq:newentropy-error}{eq:faceentropy-error} are combined. Since $h^2\le A_i$, we obtain
\[
 D_i\le|\UU_i^{\mathrm{high}}-\UU_i^{\mathrm{up}}|
 \le CA_i h^{k+1}+Ch^{k+3}+C\lambda A_i h^{k+1}
 \le CA_i h^{k+1}.
\]
Substituting this bound and \cref{lem:accuracy-amplitude} into \cref{eq:perturbsize} yields
\[
 \|u_h^{n+1}-u_h^{n+1,(\pre)}\|_\infty
 \le\frac{D_i}{c_J A_i^2}CA_i\le Ch^{k+1},
\]
completing the proof.
\end{proof}
	
	For schemes based on the LF conservative flux \cref{eq:LF} and proper LF entropy flux \cref{eq:properLF}, we propose a unified target construction. Let $p_i(x,s)$ be a local predictor approximating $u(x,t_n+s\Delta t)$ for $s\in[0,1]$, with interface traces $p^-(s)=p_i(x_{i+1/2},s)$ and $p^+(s)=p_{i+1}(x_{i+1/2},s)$, and define
	\begin{equation}\label{eq:unifiedtarget}
		\begin{aligned}
			\hat{\FF}_{i+1/2}^{\tar}
			={}&\int_0^1\hat{\FF}_{\mathrm{LF}}(p^-(s),p^+(s))\,\mathrm{d}s\\
			&+\{\UU'(u_h^n)\}\left(
			\hat f_{i+1/2}-\int_0^1\hat f_{\mathrm{LF}}(p^-(s),p^+(s))\,\mathrm{d}s\right).
		\end{aligned}
	\end{equation}
	
	\begin{proposition}\label{prop:unifiedtarget}
		Assume the interface-flux accuracy in \cref{sec:compactstructure} and parts~(1)--(2) of \cref{ass:smooth-patch}. If $w(s)=u(x_{i+1/2},t_n+s\Delta t)$ and the predictor traces satisfy
		\begin{equation}\label{eq:predictormatching}
			\sup_{0\le s\le1}
			\bigl(|p^-(s)-w(s)|+|p^+(s)-w(s)|\bigr)\le Ch^{k+1},
		\end{equation}
		then the unified target \cref{eq:unifiedtarget} satisfies \cref{ass:entropy-target}.
	\end{proposition}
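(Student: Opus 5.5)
We must show that
\[
\Bigl|\hat\FF^{\tar}_e-\UU'(\bar u_i^n)\hat f_e-\tfrac1{\Delta t}\int_{t_n}^{t_{n+1}}[\FF(u_e)-\UU'(\bar u_i^n)f(u_e)]\,\mathrm dt\Bigr|\le CA_ih^{k+1}.
\]
The plan is to write $\hat\FF^{\tar}_e$ as the sum of a ``predictor part'' and a ``correction part'', and to bound each against the exact integrand separately. We work at $e=i+1/2$; the face $e=i-1/2$ is symmetric, with $\bar u_i^n$ now the right neighbour's average. Introduce
\[
P=\int_0^1\hat\FF_{\mathrm{LF}}(p^-,p^+)\,\mathrm ds,\qquad Q=\int_0^1\hat f_{\mathrm{LF}}(p^-,p^+)\,\mathrm ds,\qquad \bar v=\{\UU'(u_h^n)\},\qquad v_0=\UU'(\bar u_i^n).
\]
Then the target reads $\hat\FF^{\tar}=P+\bar v(\hat f-Q)$. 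Let $\mathcal E_{\FF}=\int_0^1\FF(w)\,\mathrm ds$ and $\mathcal E_f=\int_0^1 f(w)\,\mathrm ds$ denote the exact time averages. The quantity to bound splits as
\[
\bigl[P-v_0Q-(\mathcal E_\FF-v_0\mathcal E_f)\bigr]+(\bar v-v_0)(\hat f-Q).
\]

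\textbf{Second term.} By the interface-flux accuracy in \cref{def:compact}, $|\hat f-\mathcal E_f|\le Ch^{k+1}$. The LF flux is Lipschitz and consistent, so \cref{eq:predictormatching} gives $|Q-\mathcal E_f|\le Ch^{k+1}$. Hence $|\hat f-Q|\le Ch^{k+1}$. Next, $|\bar v-v_0|\le L\max_\pm|u_h^{n,\pm}(x_{i+1/2})-\bar u_i^n|$. Using part (2) of \cref{ass:smooth-patch} and the smoothness of $u$, this is bounded by $C(h|u_x(x_i,t_n)|+h^{k+1})\le CA_i$ (recall $k\ge2$, so $h^{k+1}\le h^2$). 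The second term is therefore $O(A_ih^{k+1})$.

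\textbf{First term.} Pointwise in $s$, consider
\[
G(a,b)=\hat\FF_{\mathrm{LF}}(a,b)-v_0\hat f_{\mathrm{LF}}(a,b)=\frac{(\FF-v_0f)(a)+(\FF-v_0f)(b)}2-\frac\alpha2\bigl[(\UU(b)-v_0b)-(\UU(a)-v_0a)\bigr].
\]
Each of the functions $\phi=\FF-v_0f$ and $\chi=\UU-v_0(\cdot)$ has derivative vanishing at $\bar u_i^n$: indeed $\phi'=(\UU'-v_0)f'$ and $\chi'=\UU'-v_0$. Hence on a neighbourhood of radius $r$ around $\bar u_i^n$ their Lipschitz constants are $O(r)$.

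Take $r=\sup_s(|w(s)-\bar u_i^n|+h^{k+1})$. Since $w(s)=u(x_{i+1/2},t_n+s\Delta t)$ and $\Delta t=\lambda h$, the smooth solution and part (2) give $r\le C(h|u_x|+h^2)\le CA_i$. Here the time variation is controlled by $|u_t|\,\Delta t=|f'(u)u_x|\,\Delta t$ plus $O(h^2)$. Then
\[
|G(p^-,p^+)-G(w,w)|\le Cr\,(|p^--w|+|p^+-w|)\le CA_ih^{k+1}.
\]
Finally, $G(w,w)=\FF(w)-v_0f(w)$ by consistency. Integrating over $s$ gives the bound $CA_ih^{k+1}$ for the first term.

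\textbf{Main obstacle.} The delicate step is the localization estimate $r\le CA_i$. This requires bounding the one-step time variation of $u$ at the interface, the trace offset $|u(x_{i+1/2})-u(x_i)|$, and the error $|\bar u_i^n-u(x_i,t_n)|$ uniformly by $A_i=h|u_x(x_i,t_n)|+h^2$. We handle it by Taylor expansion about $(x_i,t_n)$. The first-order terms are all multiples of $u_x(x_i,t_n)$, using $u_t=-f'(u)u_x$. The remainders are $O(h^2)$ under $\lambda\le\lambda_*$, and the discretization errors are $O(h^{k+1})\le O(h^2)$.
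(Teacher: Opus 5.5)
Your proof is correct and follows the paper's argument: your function $G$ combines the paper's first two terms into $G(p^-,p^+)-G(w,w)$, your remaining term $(\{\UU'(u_h^n)\}-\UU'(\bar u_i^n))(\hat f_{i+1/2}-Q)$ is the paper's third term, and both proofs localize all states within $O(A_i)$ of $\bar u_i^n$, so that $\FF-\UU'(\bar u_i^n)f$ and $\UU-\UU'(\bar u_i^n)(\cdot)$ have $O(A_i)$ Lipschitz constants there. One small slip: in bounding $|\bar v-v_0|$ the right-hand side should be $C(h|u_x(x_i,t_n)|+h^2)$, because the Taylor remainder of the trace offset is $O(h^2)$ rather than $O(h^{k+1})$; the bound is still $CA_i$, so the conclusion is unaffected.
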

	\begin{proof}
		Lipschitz continuity and consistency of $\hat f_{\mathrm{LF}}$, \cref{eq:predictormatching}, and the interface-flux accuracy give
		\[
		\left|\hat f_{i+1/2}
		-\int_0^1\hat f_{\mathrm{LF}}(p^-(s),p^+(s))\,\mathrm{d}s\right|
		\le Ch^{k+1}.
		\]
		Expanding \cref{eq:unifiedtarget} about $\bar u_i^n$ gives
		\begin{align*}
			&\hat{\FF}_{i+1/2}^{\tar}-\UU'(\bar u_i^n)\hat f_{i+1/2}-\int_0^1\bigl(\FF(w(s))-\UU'(\bar u_i^n)f(w(s))\bigr)\,\mathrm{d}s\\
			&=\frac12\sum_{\sigma\in\{-,+\}}\int_0^1
			\Bigl(\FF(p^\sigma(s))-\FF(w(s))-\UU'(\bar u_i^n)\bigl(f(p^\sigma(s))-f(w(s))\bigr)\Bigr)\,\mathrm{d}s\\
			&\quad-\frac\alpha2\int_0^1
			\Bigl(\UU(p^+(s))-\UU(p^-(s))-\UU'(\bar u_i^n)\bigl(p^+(s)-p^-(s)\bigr)\Bigr)\,\mathrm{d}s\\
			&\quad+\bigl(\{\UU'(u_h^n)\}-\UU'(\bar u_i^n)\bigr)\left(\hat f_{i+1/2}
			-\int_0^1\hat f_{\mathrm{LF}}(p^-(s),p^+(s))\,\mathrm{d}s\right).
		\end{align*}
		Since $\FF'=\UU'f'$, all states $z$ on the segments between $w(s)$, $p^\pm(s)$, and $\bar u_i^n$ lie within $O(A_i)$ of $\bar u_i^n$ and satisfy $|\FF'(z)-\UU'(\bar u_i^n)f'(z)|+|\UU'(z)-\UU'(\bar u_i^n)|\le CA_i$ as well as $|\{\UU'(u_h^n)\}-\UU'(\bar u_i^n)|\le CA_i$. Applying the mean value theorem and \cref{eq:predictormatching} bounds each of the three terms on the right-hand side by $CA_i h^{k+1}$, proving \cref{eq:weightedconsistency} at $x_{i+1/2}$ and analogously at $x_{i-1/2}$.
	\end{proof}
	
	\begin{remark}[Time quadrature]\label{rem:target-time-quadrature}
    In practice, we use numerical quadrature to evaluate the time integrals.
    It can be shown that \cref{prop:unifiedtarget} remains valid
    when both time integrals in \cref{eq:unifiedtarget} are evaluated by
    a common positive quadrature rule on $[0,1]$ exact through degree $k+2$.
    \end{remark}
	
\subsection{Examples of compact single-step DG methods}\label{sec:realizations}
In this section, we present several compact single-step DG methods
as examples of the proposed framework. Specifically, we consider
cRKDG, LWDG, and ADER-DG. With suitable parameter choices, their
full-step flux operators satisfy
\crefrange{cond:flux-lipschitz}{cond:constant-reduction}
on bounded state sets. We emphasize here that the underlying DG schemes for computing the preliminary solution remain unchanged; we focus only on constructing
the high-order target entropy flux $\hat{\mathcal F}^{\mathrm{tar}}$ or the predictor $p_i$.

\subsubsection{Compact RKDG}\label{sec:crkrealization}
Consider the cRKDG method of \cref{sec:crkstructure} with an $s_{\mathrm{RK}}$-stage explicit RK tableau $(a_{rj},b_r)$ of order $k+1$ and fluxes \cref{eq:cRKflux}. 

We first give a unified construction valid for arbitrary $k$. For \cref{eq:unifiedtarget}, take $(m+1)$ Gauss--Legendre time nodes $\{s_q\}_{q=1}^{m+1}\subset[0,1]$ with $m\ge k$, giving exactness through degree $2m+1\ge k+2$, and apply the local RK update over $s_q\Delta t$:
\begin{equation}\label{eq:crkgeneralpredictor}
\begin{aligned}
 Y_i^{(r)}&=u_h^n|_{I_i}+s_q\Delta t
 \sum_{j<r}a_{rj}\mathcal L^{\loc}Y_i^{(j)},\\
 p_i(\cdot,s_q)&=u_h^n|_{I_i}+s_q\Delta t
 \sum_{r=1}^{s_{\mathrm{RK}}}b_r\mathcal L^{\loc}Y_i^{(r)}.
\end{aligned}
\end{equation}
Let $p_i(x,s)$ be the degree-$m$ temporal interpolant of these nodal values.
This predictor satisfies \cref{eq:predictormatching}, and the full-step conservative flux satisfies the interface-flux accuracy requirement in \cref{sec:compactstructure}.

However, repeated fractional-step RK updates increase cost. Alternatively, a continuous extension can reuse the full-step local stage derivatives $K_i^{(r)}=\mathcal L^{\mathrm{loc}}u_h^{(r)}$. For $k=2$ and Heun's third-order tableau,
\[
 p_i(x,s)=u_h^n(x)+\Delta t\left[
 sK_i^{(1)}(x)+\frac32s^2(K_i^{(2)}(x)-K_i^{(1)}(x))\right]
\]
uses only the first two derivatives, requires no extra operator evaluations, and satisfies \cref{eq:predictormatching}. Other degrees require a case-by-case construction.

\begin{remark}\label{rem:rk-averaged-flux}
Sun~\cite{sun2026fully} uses $\hat{\FF}^{\mathrm{RK}}_{i+1/2}=\sum_{\ell=1}^{s_{\mathrm{RK}}}b_\ell\hat{\FF}_{\mathrm{LF}}(u_h^{n,(\ell),-},u_h^{n,(\ell),+})$ as a stage-averaged entropy flux, which can reduce accuracy near smooth extrema. For $u_t+u_x=0$, $\UU(u)=\FF(u)=u^2/2$, and $u_h^0(x)=u_0(x)=x^2$, the preliminary $\mathbb P^2$ cRKDG update with Heun's third-order RK method exactly reproduces $(x-\Delta t)^2$ on $I_i=[-h/2,h/2]$, whereas $\hat{\FF}^{\mathrm{RK}}_{i\pm1/2}$ gives $D_i=\Delta t^4/18$. With $\lambda=\Delta t/h$, \cref{eq:theta} yields
\[
 1-\theta_i=\frac{20\lambda^4}{1+60\lambda^2},\qquad
 u_h^1(0)-u(0,\Delta t)=\frac{5\lambda^4}{3(1+60\lambda^2)}h^2.
\]
Thus single-step accuracy is reduced to $O(h^2)$ for any fixed $\lambda>0$. Our unified target \cref{eq:unifiedtarget} instead gives $D_i=0$ and $\theta_i=1$. However, no order reduction is observed in their numerical experiments.
This may be due to the small CFL number of 0.01 used in the accuracy tests,
which can mask the lower-order correction error over the tested
mesh range. 
\end{remark}

\subsubsection{Lax--Wendroff DG}
The LWDG method~\cite{qiu2005} uses the Cauchy--Kowalevski relation $u_t=-f'(u)u_x$ to replace temporal Taylor coefficients by spatial derivatives within each cell. The Taylor polynomial $\sum_{m=0}^{k+1}\Delta t^m\partial_t^m u(x,t)/m!$ yields the conservative form \cref{eq:compactstructure} with the volume flux, LF interface flux, and degree-$k$ predictor
\begin{align}
 f_{h,\mathrm{LW}}^\star&=\sum_{m=0}^{k}\frac{\Delta t^m}{(m+1)!}\partial_t^m f(u_h^n),\label{eq:lwflux}\\
 \hat f_{i+1/2}^{\mathrm{LW}}&=\{f_{h,\mathrm{LW}}^\star\}_{i+1/2}
 -\frac{\alpha_{i+1/2}}2[u_h^n]_{i+1/2},\notag\\
 p_i(x,s)&=\sum_{m=0}^{k}\frac{(s\Delta t)^m}{m!}\partial_t^m u_h^n(x).\label{eq:lwmassface}
\end{align}
This construction is straightforward and satisfies \cref{eq:predictormatching} and the interface-flux accuracy requirement in \cref{sec:compactstructure}.

\subsubsection{ADER-DG}
The ADER-DG~\cite{dumbser2008,busto2020} method determines a space--time predictor $p_i\in\mathbb P^k(I_i)\otimes\mathbb P^k([0,1])$ independently within each cell. With normalized time $s=(t-t_n)/\Delta t$, its local Galerkin formulation is
\begin{equation}\label{eq:ADER}
 \int_0^1\!\int_{I_i}\vartheta
 [\partial_s p_i+\Delta t\,\partial_x f(p_i)]\,\mathrm dx\,\mathrm ds
 +\int_{I_i}\vartheta(x,0)[p_i(x,0)-u_h^n]\,\mathrm dx=0
\end{equation}
for every test function $\vartheta$ in the same space.  The preliminary update \cref{eq:compactstructure} uses the corrector fluxes
\begin{equation}\label{eq:aderconservativefluxes}
 f_{h,\mathrm{ADER}}^\star(x)|_{I_i}=\int_0^1 f(p_i(x,s))\,\mathrm ds,\qquad
 \hat f_{i+1/2}^{\mathrm{ADER}}=\int_0^1\hat f_{\mathrm{LF}}(p^-(s),p^+(s))\,\mathrm ds.
\end{equation}
Under parts~(1)--(2) of \cref{ass:smooth-patch}, the exact small-step solution branch of \cref{eq:ADER} satisfies \cref{eq:predictormatching} and the interface-flux accuracy required in \cref{sec:compactstructure}.
Using the same positive time quadrature, exact through degree $k+2$, in \cref{eq:aderconservativefluxes} and \cref{eq:unifiedtarget} makes the conservative-flux matching term vanish identically, so $\hat{\FF}_{i+1/2}^{\tar}=\int_0^1\hat{\FF}_{\mathrm{LF}}(p^-(s),p^+(s))\,\mathrm ds$. \Cref{prop:unifiedtarget} therefore yields \cref{ass:entropy-target}.

\subsection{Extensions to systems and multidimensional problems}\label{sec:extensions}
	For a system of conservation laws \cref{eq:systemprelim} equipped with a strictly convex entropy $\UU$ satisfying $\mu I\preceq \mathcal U''(\mathbf u)\preceq LI$, the conservative fluxes $\hat{\mathbf f}_{i+1/2}$ and virtual states $\mathbf Z_{i+1/2}^\pm$ are vector-valued, whereas the numerical entropy fluxes and their feasible intervals $[\hat{\FF}_{i+1/2}^{\mathrm{lo}},\hat{\FF}_{i+1/2}^{\mathrm{hi}}]$ remain scalar. We additionally assume that all states entering entropy evaluations, including the virtual states, lie in a convex admissible range on which the stated Hessian bounds hold. Under the vector counterparts of \crefrange{cond:reference}{cond:constant-reduction}, replacing scalar squares with squared Euclidean norms in the proof of \cref{thm:cfl} establishes nonempty feasible intervals under a CFL restriction.
	
	On $d$-dimensional Cartesian tensor-product meshes, the volume predictor remains cell-local and each face flux depends only on the two adjacent cells. Writing the cell-average update as an equal-weight convex combination of $2d$ directional virtual states with weight $1/(2d)$ yields feasible entropy-flux intervals on each face under a dimension-dependent CFL condition. Evaluating the discrete cell entropy via a $(k+1)^d$-point tensor-product Gauss--Legendre quadrature rule, we apply a single scalar scaling factor $\theta_K\in[0,1]$ to all state components on cell $K$:
	\[
	\mathbf u_h^{n+1}|_K
	=\bar{\mathbf u}_K^{n+1}
	+\theta_K\bigl(
	\mathbf u_h^{n+1,(\mathrm{pre})}|_K
	-\bar{\mathbf u}_K^{n+1}\bigr),
	\]
	with $\theta_K$ defined as in \cref{eq:theta} from the multidimensional cell entropy budget, thereby preserving the vector cell average $\bar{\mathbf u}_K^{n+1}$ and enforcing the strong cell entropy inequality.

	\section{Numerical experiments}\label{sec:experiments}
	We now present numerical experiments on uniform meshes. For each $k$, the temporal discretization has the same order of accuracy as the spatial discretization. The scalar accuracy tests with $k=2,3,4$ use CFL numbers $0.05,0.02,0.01$. Unless otherwise noted, the remaining tests use the $\mathbb P^2$ ADER-DG scheme and no limiter beyond the terminal entropy scaling \cref{eq:scalingprelim}. We denote the entropy-stabilized scheme by ``ES'', the underlying scheme without terminal scaling by ``baseline'', and the cellwise entropy residual by $R_i^n=\widetilde\UU_i^{n+1}-\UU_i^{\mathrm{up}}$, so that the strong cell entropy inequality \cref{eq:es} holds if and only if $R_i^n\le0$.

	\subsection{Scalar problems}
	\begin{Ex}[Linear advection]\label{sec:linear-test}
		Consider $u_t+u_x=0$ on the periodic interval $[0,2\pi]$ with $u(x,0)=\sin^4x$ and exponential entropy $\UU(u)=e^u$. \Cref{fig:linear-convergence} shows the $L^2$ errors at $T=1$ for cRKDG, LWDG, and ADER-DG with $k=2,3,4$. The observed convergence rates are optimal for all three schemes.

		\begin{figure}[!htbp]
\centering\includegraphics[width=.96\textwidth]{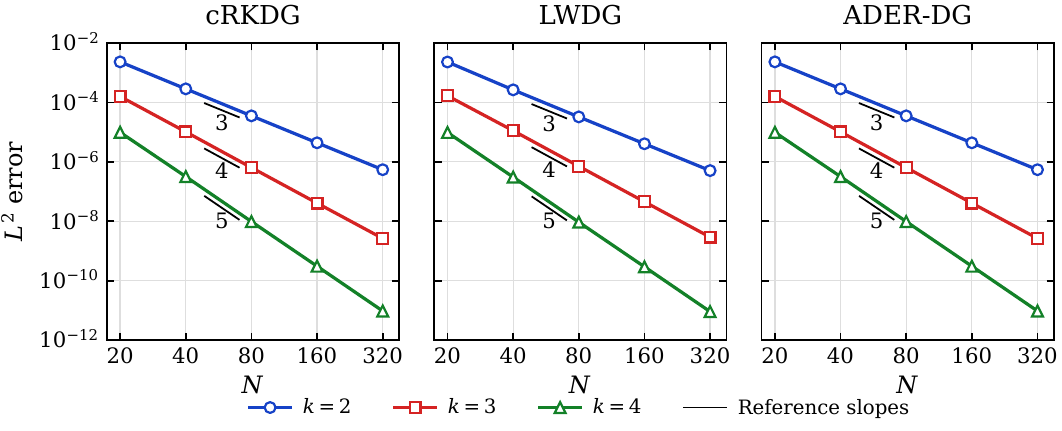}
\caption{\Cref{sec:linear-test}: Linear advection. $L^2$ errors with the exponential entropy at $T=1$. The black segments indicate the labeled reference orders.}
\label{fig:linear-convergence}
\end{figure}
		
		Next, we transport the discontinuous initial data $u(x,0)=\chi_{(\pi/2,3\pi/2)}(x)$ through one full period using $\mathbb P^2$ ADER-DG without a bound-preserving limiter. To examine how multi-entropy enforcement suppresses oscillations~\cite{chen2017entropy,liu2024entropy,liu2026limiter}, we impose the shifted arctan entropies
		\begin{equation}\label{eq:arctan-test-entropy}
\mathcal A_{a,b}(u)=\int_b^u\arctan(a(s-b))\,\mathrm{d}s,\qquad a>0,
\end{equation}
comparing the square entropy, a single shift $b=0.5$, and five or nine shifts uniformly spaced across $[0,1]$. As shown in \cref{fig:arctan-linear} at $T=2\pi$ on $N=160$ cells, enforcing five shifts simultaneously substantially reduces overshoots near the discontinuities compared with a single entropy, while nine shifts with a steeper parameter $a$ further damp oscillations at the cost of a slightly broader transition.

		\begin{figure}[!htbp]
			\centering\includegraphics[width=.96\textwidth]{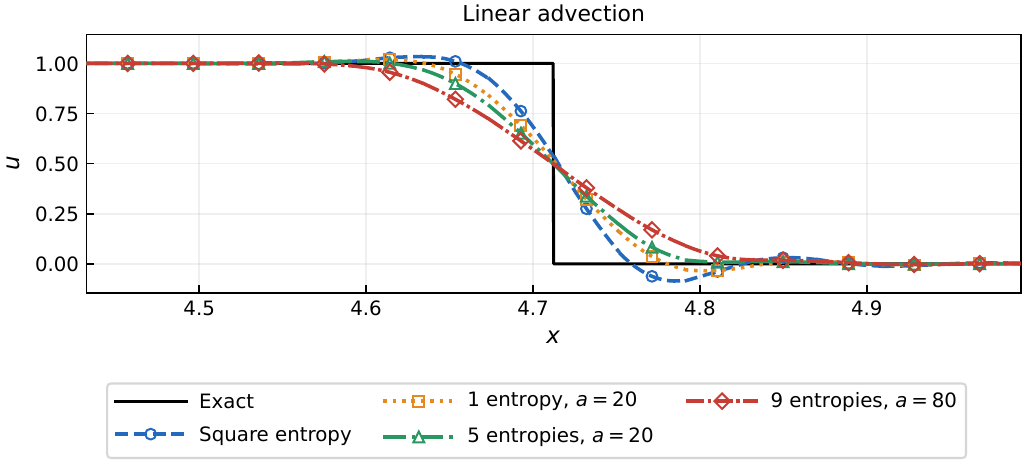}
			\caption{\Cref{sec:linear-test}: Linear advection. Numerical profiles near the discontinuity with simultaneous arctan entropy constraints at $T=2\pi$, $N=160$.}\label{fig:arctan-linear}
		\end{figure}
	\end{Ex}
	
	\begin{Ex}[Burgers' equation]\label{sec:burgers-test}
		Consider $u_t+(u^2/2)_x=0$ on the periodic interval $[0,2\pi]$ with $u(x,0)=0.5+\sin x$, which remains smooth until $T_{\mathrm{s}}=1$. With the exponential entropy $\UU(u)=e^u$, \cref{fig:burgers-convergence} reports the $L^2$ errors at $T=0.6$ for cRKDG, LWDG, and ADER-DG with $k=2,3,4$. All three methods achieve the expected convergence orders.

		\begin{figure}[!htbp]
\centering\includegraphics[width=.96\textwidth]{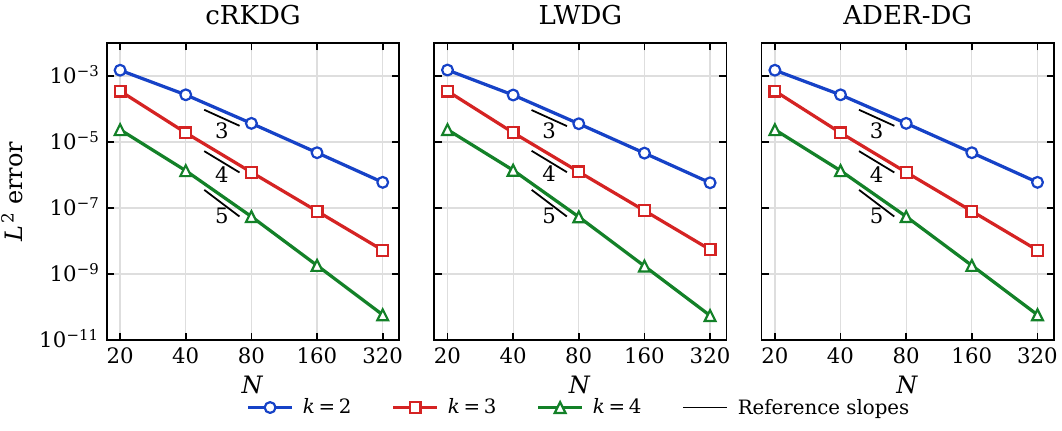}
\caption{\Cref{sec:burgers-test}: Burgers' equation. $L^2$ errors with the exponential entropy at $T=0.6$. The black segments indicate the labeled reference orders.}
\label{fig:burgers-convergence}
\end{figure}

	\end{Ex}
	
	\begin{Ex}[Buckley--Leverett equation]\label{ex:bl}
		Consider the nonconvex Buckley--Leverett flux
		\[
		f(u)=\dfrac{4u^2}{4u^2+(1-u)^2}
		\]
		on $[-0.5,0.5]$ with two Riemann problems at $x=0$: Data~1 with $(u_{\mathrm{L}},u_{\mathrm{R}})=(-3,3)$ and Data~2 with $(u_{\mathrm{L}},u_{\mathrm{R}})=(2,-2)$. Following~\cite{chen2017entropy, liu2024entropy, liu2026limiter}, we select two arctan entropies from \cref{eq:arctan-test-entropy},
		\[
		\UU^{(1)}(u)=\mathcal A_{20,0}(u),\qquad \UU^{(2)}(u)=\mathcal A_{20,1}(u),
		\]
		and test them individually and simultaneously at $T=1$ on $N=160$ cells, combined with a terminal bound-preserving limiter~\cite{zhang2012maximum}.
		
		As shown in \cref{fig:bl} and \cref{tab:bl-results}, enforcing $\UU^{(2)}$ alone produces a nonphysical intermediate plateau for Data~1, whereas enforcing $\UU^{(1)}$ alone leaves large oscillations for Data~2; in each single-entropy run, the unenforced entropy exhibits positive cell residuals. Enforcing $\UU^{(1)}$ and $\UU^{(2)}$ simultaneously satisfies both cell entropy inequalities to machine precision and captures the physically admissible composite wave structure for both Riemann problems. This highlights the importance of simultaneous entropy control in selecting physically admissible solutions for nonconvex conservation laws.
		
		\begin{figure}[!htbp]
			\centering\includegraphics[width=.96\textwidth]{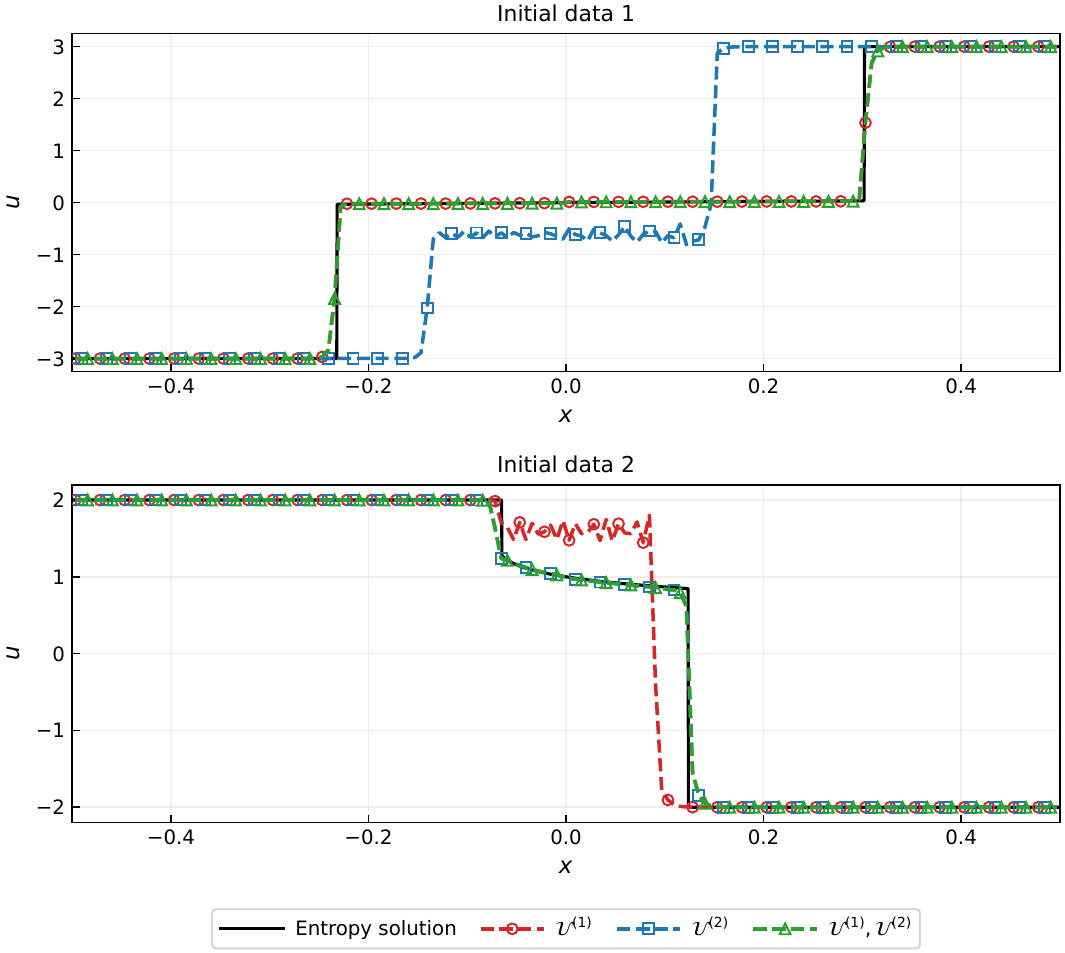}
			\caption{\Cref{ex:bl}: Buckley--Leverett equation. Solutions with different entropy constraints at $T=1$, $N=160$. }
			\label{fig:bl}
		\end{figure}
		
		\begin{table}[!htbp]
			\centering\footnotesize\setlength{\tabcolsep}{3pt}
			\caption{\Cref{ex:bl}: Buckley--Leverett equation. Errors and cell entropy residuals for both entropies, at $T=1$, $N=160$.}
			\label{tab:bl-results}
			\begin{tabular}{llr@{}lr@{}lr@{}lr@{}l}
				\toprule
				Data & Constraints & \multicolumn{2}{c}{$L^1$ error} & \multicolumn{2}{c}{$L^2$ error} & \multicolumn{2}{c}{$\max R^{(1)}$} & \multicolumn{2}{c}{$\max R^{(2)}$}\\
				\midrule
				1 & $\mathcal U^{(1)}$ & $2.021$ & ${}\times10^{-2}$ & $1.266$ & ${}\times10^{-1}$ & $3.451$ & ${}\times10^{-16}$ & $2.376$ & ${}\times10^{-2}$\\
				1 & $\mathcal U^{(2)}$ & $9.052$ & ${}\times10^{-1}$ & $1.504$ & ${}\times10^{0}$ & $2.913$ & ${}\times10^{-2}$ & $3.460$ & ${}\times10^{-16}$\\
				1 & $\mathcal U^{(1)}$ and $\mathcal U^{(2)}$ & $2.053$ & ${}\times10^{-2}$ & $1.291$ & ${}\times10^{-1}$ & $3.451$ & ${}\times10^{-16}$ & $3.460$ & ${}\times10^{-16}$\\
				2 & $\mathcal U^{(1)}$ & $1.844$ & ${}\times10^{-1}$ & $5.566$ & ${}\times10^{-1}$ & $1.716$ & ${}\times10^{-16}$ & $1.538$ & ${}\times10^{-2}$\\
				2 & $\mathcal U^{(2)}$ & $1.608$ & ${}\times10^{-2}$ & $1.193$ & ${}\times10^{-1}$ & $4.274$ & ${}\times10^{-7}$ & $1.725$ & ${}\times10^{-16}$\\
				2 & $\mathcal U^{(1)}$ and $\mathcal U^{(2)}$ & $1.605$ & ${}\times10^{-2}$ & $1.189$ & ${}\times10^{-1}$ & $1.716$ & ${}\times10^{-16}$ & $1.725$ & ${}\times10^{-16}$\\
				\bottomrule
			\end{tabular}
		\end{table}
	\end{Ex}
	
	\subsection{One-dimensional Euler equations}
	For the one-dimensional compressible Euler equations,
	\[
	\mathbf u=(\rho,\rho u,E)^{\mathrm{T}},\quad
	f(\mathbf u)=(\rho u,\rho u^2+p,u(E+p))^{\mathrm{T}},\quad
	p=(\gamma-1)\left(E-\dfrac{\rho u^2}{2}\right),
	\]
	we enforce the physical entropy pair $\UU=-\rho\log(p/\rho^\gamma)/(\gamma-1)$, $\FF=u\UU$, with $\gamma=1.4$ unless stated otherwise.
	
	\begin{Ex}[Smooth isentropic wave]\label{ex:smooth-euler}
		On $[-1,1]$ with periodic boundary conditions and $\gamma=3$, we set
		\[
		\rho(x,0)=\frac{2-\sin(\pi x)}{2\sqrt3},\quad
		u(x,0)=\frac{2+\sin(\pi x)}2,\quad
		p(x,0)=\rho(x,0)^3,
		\]
		whose exact solution~\cite{wu2025} is $\rho(x,t)=(2-w(x,t))/(2\sqrt3)$, $u(x,t)=(2+w(x,t))/2$, $p(x,t)=\rho(x,t)^3$, where $w$ solves the periodic Burgers problem $w_t+ww_x=0$ with $w(x,0)=\sin(\pi x)$. \Cref{tab:eulersmooth} shows that the ES and baseline schemes produce comparable density $L^2$ errors and convergence rates at $T=0.2$, confirming accuracy preservation for the Euler system.
		
		\begin{table}[!htbp]
			\centering
			\caption{\Cref{ex:smooth-euler}: Smooth isentropic wave. Density $L^2$ errors and convergence orders at $T=0.2$.}
			\label{tab:eulersmooth}
			\begin{tabular}{rr@{}lrr@{}lr}
				\toprule
				$N$ & \multicolumn{2}{c}{ES} & Order & \multicolumn{2}{c}{Baseline} & Order\\
				\midrule
				40 & $1.28840$ & ${}\times10^{-4}$ & -- & $1.27518$ & ${}\times10^{-4}$ & --\\
				80 & $2.34363$ & ${}\times10^{-5}$ & 2.459 & $2.34700$ & ${}\times10^{-5}$ & 2.442\\
				160 & $3.87415$ & ${}\times10^{-6}$ & 2.597 & $3.88746$ & ${}\times10^{-6}$ & 2.594\\
				320 & $6.02935$ & ${}\times10^{-7}$ & 2.684 & $6.04791$ & ${}\times10^{-7}$ & 2.684\\
				640 & $9.03971$ & ${}\times10^{-8}$ & 2.738 & $9.06131$ & ${}\times10^{-8}$ & 2.739\\
				\bottomrule
			\end{tabular}
		\end{table}
	\end{Ex}
	
	\begin{Ex}[Shu--Osher problem]\label{ex:shu-osher}
		The Shu--Osher problem~\cite{shu1989efficient} on $[-5,5]$ initializes a Mach~3 shock interacting with a sinusoidal density wave:
		\[
		(\rho,u,p)(x,0)=
		\begin{cases}
			(3.857143,2.629369,10.3333),&x<-4,\\
			(1+0.2\sin(5x),0,1),&x\ge-4.
		\end{cases}
		\]
		\cref{fig:shu-osher} compares the ES density profile at $T=1.8$ on $N=200$ cells against a 2000-cell numerical reference solution, showing sharp shock resolution and accurate post-shock density waves.
		
		\begin{figure}[!htbp]\centering
			\includegraphics[width=.90\textwidth]{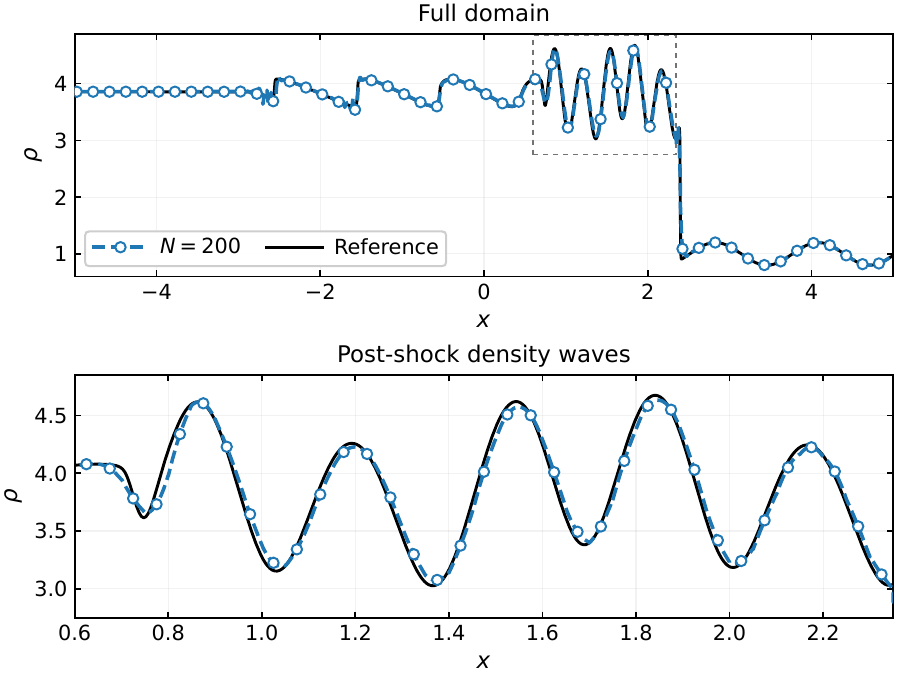}
			\caption{\Cref{ex:shu-osher}: Shu--Osher problem. Density at $T=1.8$ on $N=200$ cells. The lower panel enlarges the post-shock oscillations.}
			\label{fig:shu-osher}
		\end{figure}
	\end{Ex}

	\subsection{Two-dimensional Euler equations}
	
	\begin{Ex}[Isentropic vortex]\label{ex:vortex}
		Consider the two-dimensional isentropic vortex problem~\cite{yee1999lowdissipative} on the periodic domain $[0,10]\times[-5,5]$ with background velocity $(1,0)$ up to $T=10$. With minimum-image coordinates $X=x-5-t$ and $Y=y$, the exact solution is
		\[
		A=e^{1-X^2-Y^2},\qquad
		\vartheta=1-\dfrac{25(\gamma-1)}{16\gamma\pi^2}A^2,\qquad
		\rho=\vartheta^{1/(\gamma-1)},\quad p=\rho^\gamma,
		\]
		and $(u,v)=\bigl(1-5AY/(2\pi),5AX/(2\pi)\bigr)$. \Cref{tab:euler-vortex-comparison} shows that on $N\times N$ meshes, the ES and baseline schemes achieve comparable third-order convergence with nearly identical errors on fine grids.
		
		\begin{table}[!htbp]
			\centering
			\caption{\Cref{ex:vortex}: Isentropic vortex. Density $L^2$ errors and convergence orders at $T=10$.}
			\label{tab:euler-vortex-comparison}
			\begin{tabular}{rr@{}lrr@{}lr}
				\toprule
				$N$ & \multicolumn{2}{c}{{ES}} & Order & \multicolumn{2}{c}{{Baseline}} & {Order}\\
				\midrule
				20 & $4.05801$ & ${}\times10^{-2}$ & -- & $4.71870$ & ${}\times10^{-2}$ & --\\
				40 & $2.74242$ & ${}\times10^{-3}$ & 3.887 & $2.79639$ & ${}\times10^{-3}$ & 4.077\\
				80 & $2.98986$ & ${}\times10^{-4}$ & 3.197 & $3.00077$ & ${}\times10^{-4}$ & 3.220\\
				160 & $4.30180$ & ${}\times10^{-5}$ & 2.797 & $4.30294$ & ${}\times10^{-5}$ & 2.802\\
				\bottomrule
			\end{tabular}
		\end{table}
	\end{Ex}
	
	\begin{Ex}[Double Mach reflection]\label{ex:dmr}
		For the double Mach reflection problem~\cite{woodward1984numerical} on $[0,4]\times[0,1]$, a Mach~10 shock initially located at $x=1/6+y/\sqrt3$ separates the post-shock state $(\rho,u,v,p)=(8,8.25\sqrt3/2,-4.125,116.5)$ from the pre-shock state $(1.4,0,0,1)$. Inflow conditions are prescribed on the left boundary and on $[0,1/6]$ along the bottom boundary, a reflecting wall is imposed for $x>1/6$ along the bottom boundary, exact shock motion is prescribed at $y=1$, and outflow is used at $x=4$. \Cref{fig:dmr-density} shows the density at $T=0.2$ on $960\times240$ cells with a positivity-preserving limiter~\cite{zhang2010positivity}, capturing the Mach stems and the Kelvin--Helmholtz roll-up along the primary slip line.
		
		\begin{figure}[!htbp]\centering
			\setlength{\abovecaptionskip}{8pt}\includegraphics[width=\textwidth]{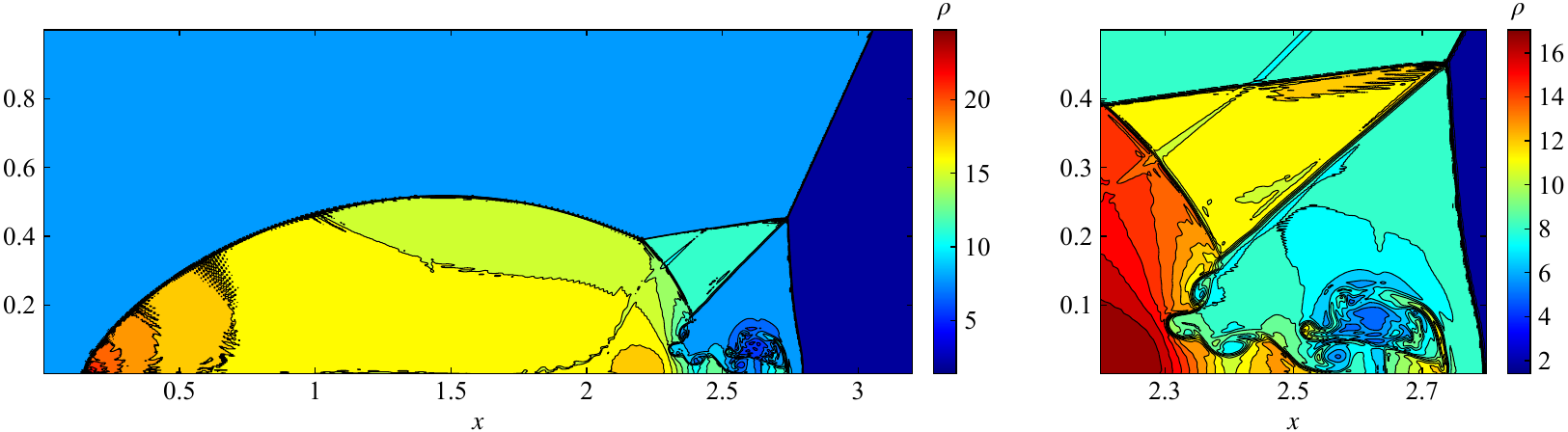}\par\nointerlineskip
			\caption{\Cref{ex:dmr}: Double Mach reflection. Density at $T=0.2$ on $960\times240$ cells. Left: overall view; right: local enlargement.}
			\label{fig:dmr-density}
		\end{figure}
	\end{Ex}

	\section{Concluding remarks}\label{sec:conclusions}
	We have developed a unified framework for high-order, fully discrete entropy-stable DG methods with compact explicit single-step time discretizations. By scaling the preliminary DG polynomial toward its cell average once at the end of each step, the method enforces a two-level strong cell entropy inequality while preserving conservation. Whereas standard SSP-RK3 DG updates can violate the necessary cell-average entropy condition at arbitrarily small CFL numbers due to stagewise stencil expansion, compact single-step methods admit nonempty intervals of feasible numerical entropy fluxes under a CFL condition by utilizing the polynomial Jensen residual to control high-order flux perturbations.
	
	Projecting a compatible high-order target entropy flux onto the feasible interval guarantees stability and preserves optimal order single-step accuracy in smooth regions, including near nondegenerate extrema. Our unified Lax--Friedrichs target construction applies across cRKDG, LWDG, and ADER-DG schemes. Moreover, the scheme accommodates multiple simultaneous entropy constraints, while ensuring that strong $L^1$ limits satisfy all enforced entropy inequalities. Numerical experiments confirm optimal convergence rates, effective multi-entropy control for nonconvex fluxes, and robustness. Future work will address extensions to unstructured meshes and more complex systems.
	
	\bibliographystyle{siamplain}
	\bibliography{references}
\end{document}